\newcommand{\figref}[1]{{\bf Fig.~\ref{#1}}}
\newcommand\bbR{\mathbb{R}}
\newcommand\bV{\boldsymbol{V}}
\newcommand\lv{\left\vert}
\newcommand\rv{\right\vert}
\newcommand\dd{\mathrm{d}}
\newcommand\de{\mathrm{e}}
\newcommand\ri{{\rm{i}}} 
\newcommand\eps{\varepsilon}
\spnewtheorem{algorithm}{Algorithm}{\bf}{\rm}
\begin{document}

\title{High Order Finite Difference Schemes for the Transparent Boundary
	Conditions and Their Applications in the 1D Schr{\"o}dinger-Poisson
	Problem}

\titlerunning{High Order FD Schemes for the TBCs of the 1D Schr{\"o}dinger-Poisson
	Problem}        

\author{Meili Guo         \and
        Haiyan Jiang      \and
        Tiao Lu           \and
        Wenqi Yao$^{\ast}$\thanks{*Corresponding author}
        }

\institute{Meili Guo \at
              School of Mathematics and Statistics, Beijing Institute of
              Technology, 100081, Beijing, China\\
              \email{guomeili@bit.edu.cn}           
           \and
         Haiyan Jiang \at
              School of Mathematics and Statistics, Beijing Institute of
              Technology, 100081, Beijing, China\\
              \email{hyjiang@bit.edu.cn}
           \and
           Tiao Lu  \at
           	   HEDPS, CAPT, LMAM, School of Mathematical Sciences,
           	  Peking University, 100871, Beijing, China\\
           	  \email{tlu@pku.edu.cn}
           \and
           Wenqi Yao \at
           School of Mathematics, South China University of
           Technology, 510641, Guangzhou, Guangdong, China\\
           \email{yaowq@scut.edu.cn}
           }

\maketitle

\begin{abstract}
The 1D Schr{\"o}dinger equation closed with the
transparent boundary conditions(TBCs) is known as a
successful model for describing quantum effects, and is usually
considered with a self-consistent Poisson equation
in simulating quantum devices.
We introduce discrete fourth order transparent boundary conditions(D4TBCs),
which have been proven to be essentially
non-oscillating when the potential vanishes, and
to share the same accuracy order with
the finite difference scheme used to
discretize the 1D Schr{\"o}dinger equation.
Furthermore, a framework of analytic discretization of
TBCs(aDTBCs) is proposed, which does
not introduce any discretization error, thus
is accurate.
With the accurate discretizations, one is able to
improve the accuracy of the discretization for the 1D Schr{\" o}dinger 
problem to arbitrarily high levels. 
As numerical tools, two globally fourth order compact finite
difference schemes are proposed for the 1D
Schr{\"o}dinger-Poisson problem, involving either of
the D4TBCs or the aDTBCs, respectively, and the uniqueness of solutions of both
discrete Schr{\"o}dinger problems are rigorously proved.
Numerical experiments, including simulations of a resistor and two 
nanoscale resonant tunneling diodes, verify the accuracy order of
the discretization schemes and show potential of
the numerical algorithm introduced for
the 1D Schr{\"o}dinger-Poisson problem in
simulating various quantum devices.

\keywords{1D Schr{\"o}dinger-Poisson problem \and Transparent boundary condition
	 \and Compact finite difference scheme \and Discrete transparent boundary condition
	 \and Resonant tunneling diode}
\subclass{34L40 \and 65L10 \and 65L20}
\end{abstract}

\section{Introduction}\label{Introduction}
The present trend of semiconductor fabrication
technology shows a dramatic size reduction of semiconductor
devices.
As a result, quantum transport models are considered
in order to clearly understand the electrical properties
of nanoscale devices.
Three equivalent approaches are proposed
to model nanoscale devices, including the Wigner
transport equation\cite{1991Numerical,frensley1987},
the non-equilibrium Green's function(NEGF) method
\cite{2000Nanoscale,2008Boundary}, and the Schr{\"o}dinger
equation\cite{2003nanoMOS,Rana1996Self,2013Transient}.

We use the 1D Schr{\"o}dinger equation to describe the
characteristics of quantum devices, where
appropriate boundary conditions are needed
to confine the problem in a bounded domain.
The set of transparent boundary conditions (TBCs)\cite{ben1997one}
is a good candidate, and the yielded 1D Schr{\"o}dinger problem,
i.e., the 1D Schr{\"o}dinger equation equipped with the
TBCs, admits a unique solution, thus is well-defined.
When simulating realistic devices, one has to consider
self-consistent potential due to the difference of
electron density and doping density.
Therefore, a nonlinear 1D Schr{\"o}dinger-Poisson problem
is yielded naturally, by taking into account a
potential governed by a Poisson equation equipped with
appropriate boundary conditions, such as the electric
neutrality boundary conditions used in this paper.

Discretizing TBCs is crucial in the
development of numerical tools for solving
the 1D Schr{\"o}dinger problem.
As is described in \cite{Anton2001},
inappropriate discretizations of
TBCs lead to unphysical spurious oscillations
in numerical solutions when potential vanishes.
To solve this problem, a set of discrete fourth order
transparent boundary conditions(D4TBCs) are introduced
in this paper together with a fourth order compact
finite difference discretization of the 1D Schr{\"o}dinger
equation.
In addition, we prove the solution
of the yielded discretization system corresponding to
the 1D Schr{\"o}dinger problem is unique with the form
of discrete plain waves, which in turn explains
extinction of spurious oscillations when potential
vanishes.
However, discrete transparent boundary conditions(DTBCs),
no matter our D4TBCs or Arnold's\cite{Anton2001},
have drawbacks.
To deduce discrete transparent boundary conditions,
a discrete dispersion relation is needed,
to obtain which an algebraic equation has to be solved.
The order of the algebraic equation is determined
by the stencil used for spatial discretization of the
1D Schr{\"o}dinger equation,
and the larger the stencil is, the higher
the order of the algebraic equation will be.
Furthermore, DTBCs introduce discretization error.
To overcome drawbacks possessed by DTBCs, we further propose
the framework of analytic discrete transparent boundary conditions(aDTBCs),
which is accurate in discretizing the TBCs.
More importantly, without high order algebraic equations 
to solve, we are able to improve the accuracy order of 
the discretization of the 1D Schr{\" o}dinger problem to arbitrarily high,
by using aDTBCs. 
For numerical simulations, we introduce a fourth order compact finite 
difference discretization of the 1D Schr{\"o}dinger problem
involving the aDTBCs, which also well-defined due to the
uniqueness of solution.

The resonant tunneling diode(RTD) has been widely used
as a high-frequency and low-consumption oscillator
or switch, and is studied both experimentally
and theoretically over decades\cite{datta1990quantum,1974Resonant}.
In order to simulate RTDs, we propose an
algorithm to solve the 1D Schr{\"o}dinger-Poisson problem,
where two discrete models are available, by equipping
D4TBCs and aDTBCs, respectively.
Numerical experiments are carried out following the
algorithm and include two stages.
In the first stage, we simulate a classic $n^{++}-n^+-n^{++}$
resistor and an RTD, with the potential known as a {\it priori},
to verify fourth order accuracy of both
discrete models.
In the second stage, the algorithm is applied to
simulate another RTD, with the potential updated
simultaneously.
I-V characteristic curves of both RTDs are simulated,
where typical features of RTDs are observed, including
negative differential resistance
\cite{1985Room,tsu1973tunneling},
and corresponding performance in electron density,
potential and transmission coefficients.

This article is organized as follows.
In  Sect.~\ref{sec:schrodinger_poisson_problem}, 
we make a brief review about 
the 1D Schr{\"o}dinger-Poisson problem.
Finite difference discretizations of the
Schr{\"o}dinger equation
and TBCs are thoroughly discussed in  Sect.~\ref{sec:discretization_scheme},
where well-posedness and accuracy of yielded discrete systems
are proved.
Discretization of the self-consistent Poisson equation
equipped with electric neutrality boundary conditions
is given in  Sect.~\ref{sec:Algorithm}, and an algorithm for solving
the discrete Schr{\"o}dinger-Poisson problem is
introduced subsequently.
 Sect.~\ref{sec:numerical_experiments} is devoted to
simulations of a short $n^{++}-n^+-n^{++}$ resistor
and two resonant tunneling diodes by using the
algorithm presented in  Sect.~\ref{sec:Algorithm}.

\section{1D  Schr{\"o}dinger-Poisson problem}\label{sec:schrodinger_poisson_problem}
We consider 1D Schr{\"o}dinger equation on the real line:
\begin{equation}\label{eq:schrodinger}
	\left(-\frac{\hbar^2}{2m^*} \dfrac{\dd^2}{\dd x^2} + V(x)\right)
	\psi(x;k)
	= E\psi(x;k), \quad x\in \bbR,\quad
	\forall k\in \bbR,
\end{equation}
where $\psi(x;k)$ denotes the complex wave
function of the electron, $E=\frac{\hbar^2k^2}{2m^*}$
is the energy of an electron at steady state,
physical parameters $\hbar$ and $m^*$ are the reduced
Planck constant and the effective mass of the electron.
The potential function $V(x)=V_{b}(x) + V_{s}(x)$,
where $V_{b}(x)$ is the conduction band
structure, and $V_{s}(x)$ is an external potential exerted
to the device.
$V_s(x)$ is either a given potential function
or determined by a Poisson equation
equipped with Neumann boundary conditions,
which reads
\begin{eqnarray}\label{eq:poisson}
	-\dfrac{\dd^2}{\dd x^2}V_{s}(x) = \frac{q_e^2}{\varepsilon}(n(x)-N_d(x)), \quad
	x\in[0,L],\\\label{poisson_bc}
	V_{s}^\prime(0) = 0, \quad V_{s}^\prime(L) =  0,
\end{eqnarray}
where $\varepsilon$ is the dielectric constant,
$q_e$ is the electronic charge, $n(x)$ and $N_d(x)$
denote electron density and
doping density, respectively.
Furthermore, $V$ is extended to $x<0$ and $x>L$ by
setting
\begin{equation}\label{V_extend}
	V(x)= \left\{\begin{array}{lr}V(0) ,
		& x < 0,\\ V(L), & x > L,
	\end{array}\right.\end{equation}
in order to meet the requirement of \eqref{eq:schrodinger}.
When \eqref{V_extend} is applied,
\eqref{eq:schrodinger}
admits solution which takes the form of:
\begin{equation}\label{wave_outside_boundary_form1}
	\psi(x;k) = \left\{ \begin{array}{lr}
		A \de^{-\mathrm{i}k_1 x} + B \de^{\mathrm{i}k_1x},
		& x\leq 0, \\
		C \de^{-\mathrm{i}k_2 x} + D \de^{\mathrm{i}k_2 x}, &
		x\geq L,
	\end{array}\right.
\end{equation}
where $A$, $B$, $C$ and $D$ are arbitrary constants, and $k_1 = \sqrt{\frac{2m^* (E-V(0))}{\hbar^2}}$,
$k_2=\sqrt{\frac{2m^* (E-V(L))}{\hbar^2}}$, where $E\geq \max\{V(0),V(L)\}$ is mandatory.
Let the wave $\psi(x;k)$ enter the device region,
i.e., $x\in [0,L]$, from $x\leq 0$ with amplitude $1$, and
let a part
of the wave be reflected at $x= 0$ while
the remaining part be transmitted and travel to $+\infty$.
Based on the above assumption,
\eqref{wave_outside_boundary_form1} is further
rewritten as
\begin{equation}\label{wave_outside_boundary}
	\psi(x;k) = \left\{ \begin{array}{lr}
		r \de^{-\mathrm{i}k_1 x} + \de^{\mathrm{i}k_1x}, &
		x\leq 0, \\
		t \de^{\mathrm{i}k_2 x}, &
		x\geq L,
	\end{array}\right.
\end{equation}
where $r$ and $t$ are the so called reflection and
transmission coefficients satisfying $\lv r\rv^2
+\lv t\rv^2 = 1$.
\eqref{eq:schrodinger} and \eqref{eq:poisson} would
not form
a well-posed problem confined to $x\in[0,L]$, unless
appropriate boundary conditions are equipped to
\eqref{eq:schrodinger} restricted on $x\in [0,L]$.
As discussed in \cite{ben1997one}, artificial transparent
boundary conditions(TBCs) are appended to close
\eqref{eq:schrodinger}, which read
\begin{eqnarray}\label{eq:TBC_left}
	\psi'(0;k)+\mathrm{i} k_1\psi(0;k) &=& 2\mathrm{i} k_1,\\
	\label{eq:TBC_right}
	\psi'(L;k) - \mathrm{i} k_2 \psi(L;k) &=& 0.
\end{eqnarray}
One thing is noticed that
TBCs are necessary conditions of \eqref{wave_outside_boundary},
without requiring
the {\it priori} values of $R$ and $T$.
At equilibrium, the device is connected
to two contacts with the same Fermi levels, and
the electron density $n(x)$ in \eqref{eq:poisson}
is related to the wave function through

\begin{equation}\label{density}
	n(x) =
	\dfrac{1}{2\pi}\int_{-\infty}^\infty  F\left(E_F-E(k)\right)\lv\psi(x;k)\rv^2 \dd k,
\end{equation}
where
\begin{equation}\label{fermi}
	F\left(E_F-E\right) = \frac{m^*k_BT_L}{\pi\hbar^2}\ln\left(
	1+ \exp\left(\frac{E_F - E}{k_BT_L}\right)\right),
\end{equation}
and $E_F$ denotes the Fermi level of the left contact,
$k_B$ and $T_L$ are Boltzmann constant and temperature
of lattice, respectively.
However, when a bias voltage $V_{ds}$ is applied to one
of two contacts, saying the right one,
then the total electron density is given by\cite{DATTA1998771}

\begin{equation}\label{total_density}
	n(x)  =\dfrac{1}{2\pi}\left[ \displaystyle\int_{0}^{\infty}F\left(E_F-E(k)\right)
	\lv\psi(x;k)\rv^2\dd k + \displaystyle\int_{-\infty }^{0}
	F\left(E_F-q_eV_{ds}-E(k)\right)\lv\psi(x;k)\rv^2\dd k\right].
\end{equation}
Under this circumstance, the current density reads
\begin{equation}\label{current_density}
	I = \dfrac{q_e}{2\pi\hbar}\int_0^\infty T(E) \left[F\left(E_F-E\right)
	-F\left(E_F-q_eV_{ds}-E\right)\right]\dd E,
\end{equation}
where the transmission coefficient $T(E)$ is defined as
the ratio of the transmitted current $I_{\rm trans}$
and the incident current $I_{\rm inc}$, and can be expressed as
\begin{equation}\label{transmission_coefficient}
	T(E) = \dfrac{I_{\rm trans}}{I_{\rm inc}}=1 - \lv r\rv^2.
\end{equation}

\section{Discretization schemes for the
	1D Schr{\"o}dinger problem}\label{sec:discretization_scheme}
The whole problem contains two subproblems, i.e.,
the 1D Schr{\"o}dinger problem(\eqref{eq:schrodinger}
equipped with TBCs),
and the self-consistent Poisson problem(\eqref{eq:poisson}
-\eqref{poisson_bc}),
and they both correspond to one model problem:
\begin{eqnarray}\label{model_problem}
	u_{xx} &=& f(x), \quad x\in [0,L],\\\label{model_BC1}
	u_x(0)+a_l u(0) &=& b_l, \\\label{model_BC2}
	u_x(L)+a_r u(L) &=& b_r,
\end{eqnarray}
where $u,f,a_{l,r},b_{l,r}\in \mathbb{C}$ depend
on the specific problem.
In this section, we firstly propose a globally fourth order
compact scheme for solving the model problem
\eqref{model_problem}, which could be
directly applied to discretize the Schr{\"o}dinger
equation and the Poisson equation.
In the rest of this section,
lots of efforts are devoted to propose and discuss
two optimal discretizations of the TBCs, i.e.,
the D4TBCs and the aDTBCs.

\subsection{A globally fourth order compact scheme for the model problem}
\label{subsec:model+discretization}
Set a uniform mesh, saying $x_i = i \Delta x$,
$i=0,1,\cdots N_x$, $N_x = \frac{L}{\Delta x}$.
Following one of the traditional ways of constructing
compact finite difference schemes\cite{CompactScheme,CompactSchemeShu1},
we approximate the first and second derivatives
of any smooth function $u(x)$
via the linear combination of nodal values:
\begin{equation}\label{Compact_Scheme_general_first_derivative}
	\sum_{k=-p}^p a_k\left(u_{x}\right)_{j+k}
	= \frac{1}{\Delta x}\sum_{k=1}^q b_k\left(
	u_{j+k} - u_{j-k}\right),
\end{equation}
\begin{equation}\label{Compact_Scheme_general_second_derivative}
	\sum_{k=-l}^l \alpha_k\left(u_{xx}\right)_{j+k}
	= \frac{1}{\Delta x^2}\sum_{k=1}^s \beta_k\left(
	u_{j+k} - 2u_j + u_{j-k}\right),
\end{equation}
where $u_i$, $\left(u_x\right)_i$ and $\left(u_{xx}\right)_i$ represent
approximations of $u$, $u_x$ and $u_{xx}$ at
$x_i$, respectively.
Let $p=q=l=s=1$ and match Taylor series coefficients
of both sides of \eqref{Compact_Scheme_general_first_derivative} and
\eqref{Compact_Scheme_general_second_derivative}
at $x_i$ to fourth order, respectively.
As a result, one gets fourth order compact schemes
for discretizing $u_x$ and $u_{xx}$ at $x_i$,
$i=1,2,\cdots,N_x-1$, which read
\begin{equation}\label{Compact_4th_general_first_derivative}
	\frac{1}{4} \left(u_{x}\right)_{i-1}
	+ \left(u_{x}\right)_i
	+ \frac{1}{4} \left(u_{x}\right)_{i+1}
	= \frac{3}{4\Delta x} \left( u_{i+1}
	- u_{i-1}\right),
\end{equation}
and
\begin{equation}\label{Compact_4th_general_second_derivative}
	\frac{1}{10} \left(u_{xx}\right)_{i-1}
	+ \left(u_{xx}\right)_i
	+ \frac{1}{10} \left(u_{xx}\right)_{i+1}
	= \frac{6}{5\Delta x^2} \left( u_{i+1} -
	2u_i + u_{i-1}\right).
\end{equation}

Recalling that $u$ solves \eqref{model_problem},
we replace $\left(u_{xx}\right)_{i\pm 1}$
\eqref{Compact_4th_general_second_derivative} with
$f_{i\pm 1}$, where $f_i$ represents the approximation
of $f(x_i)$(with at least fourth order accuracy),
and obtain a fourth order discretization scheme
for \eqref{model_problem}, which reads
\begin{equation}\label{model_equation_compact}
	\left(\lambda u_{j-1}-f_{j-1}\right)
	-\left(2\lambda u_j+10 f_j\right)
	+ \left(\lambda u_{j+1} - f_{j+1}\right) = 0,\quad
	j=1\cdots,N_x-1,
\end{equation}
where $\lambda = \frac{12}{\Delta x^2}$.
To discretize \eqref{model_BC1}-\eqref{model_BC2},
two ghost points: $x_{-1}\triangleq x_0-\Delta x$
and $x_{N_x+1} \triangleq x_{N_x}+\Delta x$ are introduced.
We further adopt Taylor series expansion
of $u_x(x_{i-1})$ and $u_x(x_{i+1})$ at $x_i$
up to fourth order in
\eqref{Compact_4th_general_first_derivative},
and obtain
\begin{equation}\label{Compact_4th_general_first_derivative_second_form}
	\frac{3}{2} \left(u_x\right)_i +
	\frac{\Delta x^2}{4} \left( u_{xxx}\right)_i
	=  \frac{3}{2} \left(u_x\right)_i +
	\frac{\Delta x^2}{4} \left( f_x\right)_i
	= \frac{3}{4\Delta x}\left(u_{i+1}-u_{i-1}\right),
\end{equation}
where \eqref{model_problem} is used.
Therefore, we use second order central finite
difference scheme to replace $\left(f_x\right)_i$
in \eqref{Compact_4th_general_first_derivative_second_form}
and obtain an alternative fourth order discretization of
$u_x(x_i)$, which reads
\begin{equation}\label{Compact_4th_general_first_derivative_third_form}
	\left(u_x\right)_i =
	-\frac{\Delta x}{12}\left(f_{i+1}-f_{i-1}\right)
	+ \frac{1}{2\Delta x}\left(u_{i+1}-u_{i-1}\right).
\end{equation}
We will discuss the usage of
\eqref{Compact_4th_general_first_derivative_third_form}
in discretizing boundary conditions of Schr{\"o}dinger
equation and Poisson equation in detail, respectively,
in subsequent sections.

\subsection{The D4TBCs}
\label{subsec:essentially_non_oscillating}
We instantiate \eqref{model_problem} with the Schr{\"o}dinger
problem.
Applying \eqref{model_equation_compact},
we discretize \eqref{eq:schrodinger} at interior grid nodes as
\begin{equation}\label{4th_compact_schrodinger_interior}
	\left(\lambda \psi_{j-1}- f_{j-1}\right)
	-\left(2\lambda \psi_j+10 f_j\right)
	+ \left(\lambda \psi_{j+1} - f_{j+1}\right) = 0,\quad
	j=1,2\cdots,N_x-1.
\end{equation}
where $f_j=\frac{2m^*}{\hbar^2}(V_j-E)\psi_j$,
$V_j$ and $\psi_j$ denote by the approximations
of $V(x_j)$ and $\psi(x_j;k)$, respectively.
Furthermore, $\psi_x(0;k)$ is approximated
with $\left(\psi_x\right)_0$, where
\begin{equation}\label{phi_x_0_4th_compact}
	\left(\psi_x\right)_0
	= \left[\dfrac{1}{2\Delta x}-\dfrac{\Delta x}{12}
	\dfrac{2m^*}{\hbar^2}(V_1-E)\right]\psi_1
	-\left[\dfrac{1}{2\Delta x}-\dfrac{\Delta x}{12}
	\dfrac{2m^*}{\hbar^2}(V_0-E)\right]\psi_{-1},
\end{equation}
by setting $i=0$ in
\eqref{Compact_4th_general_first_derivative_third_form},
replacing $f_{\pm 1}$ with $\frac{2m^*}{\hbar^2}
(V_{\pm 1}-E)\psi_{\pm 1}$,
where $V_{-1} = V_0$ according to \eqref{V_extend}.
Similarly, one obtains
\begin{equation}\label{phi_x_Nx_4th_compact}
	\left(\psi_x\right)_{N_x}
	= \left[\dfrac{1}{2\Delta x}-\dfrac{\Delta x}{12}
	\dfrac{2m^*}{\hbar^2}(V_{N_x}-E)\right]\psi_{N_x+1}
	-\left[\dfrac{1}{2\Delta x}-\dfrac{\Delta x}{12}
	\dfrac{2m^*}{\hbar^2}(V_{N_x-1}-E)\right]\psi_{N_x-1}.
\end{equation}
Substituting \eqref{phi_x_0_4th_compact} and
\eqref{phi_x_Nx_4th_compact}
in \eqref{eq:TBC_left} and \eqref{eq:TBC_right},
and combining \eqref{4th_compact_schrodinger_interior}
at boundary grid points $x_0$ and $x_{N_x}$
to eliminate $\psi_{-1}$ and
$\psi_{N_x+1}$, respectively, one obtains
discretizations of
\eqref{eq:TBC_left} and \eqref{eq:TBC_right} respectively
as
\begin{equation}\label{TBC_left_4thCompact}
	a_0\psi_0+b_1\psi_1=d_0,
\end{equation}
and
\begin{equation}\label{TBC_right_4thCompact}
	b_{N_x-1}\psi_{N_x-1}+a_{N_x}\psi_{N_x}=0,
\end{equation}
where

\begin{eqnarray*}
	a_0 &=& -2t+10(E-V_0)+2\ri k_1 t\Delta x\dfrac{t+E-V_0}{t+2(E-V_0)},\\
	b_1 &=& t+E-V_1 + (t+2(E-V_1))\dfrac{t+E-V_0}{t+2(E-V_0)},\\
	d_0 &=& 4\ri k_1 t \Delta x\dfrac{t+E-V_0}{t+2(E-V_0)},\\
	b_{N_x-1} &=& t+E-V_{N_x-1} + (t+2(E-V_{N_x-1}))\dfrac{t+E-V_{N_x}}{t+2(E-V_{N_x})},\\
	a_{N_x} &=& -2t+10(E-V_{N_x})+2\ri k_2 t\Delta x\dfrac{t+E-V_{N_x}}{t+2(E-V_{N_x})},
\end{eqnarray*}
with $t = \frac{\hbar^2}{2m^*}\lambda$.
From now on, we shall call \eqref{TBC_left_4thCompact}
+\eqref{TBC_right_4thCompact}
the compact fourth order transparent boundary conditions(C4TBCs).
Consequently, a globally fourth order compact discretization
scheme is yielded for the Schr{\"o}dinger
problem, i.e., \eqref{4th_compact_schrodinger_interior}
equipped with the C4TBCs.

Although \eqref{4th_compact_schrodinger_interior}
equipped with the C4TBCs
possesses a desirable accuracy order in discretizing
the Schr{\"o}dinger problem,
it introduces spurious oscillation in numerical solution
when $V(x)$ vanishes.
We illustrate this phenomenon through an example.
Let $V(x)\equiv 0$, $E = 0.5$, $L = 10$,
and a wave enter from $x=0$.
Parameters $\hbar=m^*=1$ for simplicity.
Because $V\equiv 0$, the wave is completely transmitted
through $x=L$, thus $\psi^*(x) = \de^{\mathrm{i}kx}$
solves the Schr{\" o}dinger
problem with $k=\sqrt{\frac{2m^* E}{\hbar^2}}$.
Norm of numerical wave functions
of the Schr{\"o}dinger problem
solved with different schemes are collected in \figref{fig01}.
Result corresponding to \eqref{4th_compact_schrodinger_interior}
equipped with the C4TBCs,
is plotted in red dash line.
Corresponding curve of the exact wave function, i.e.,
$\psi^*(x) = \de^{\mathrm{i}kx}$, is also shown
in blue solid line as reference.
Obviously, spurious oscillation occurs
when \eqref{4th_compact_schrodinger_interior}
is equipped with the C4TBCs.

\begin{figure}[htbp]
	\centering{\includegraphics[width=\textwidth]{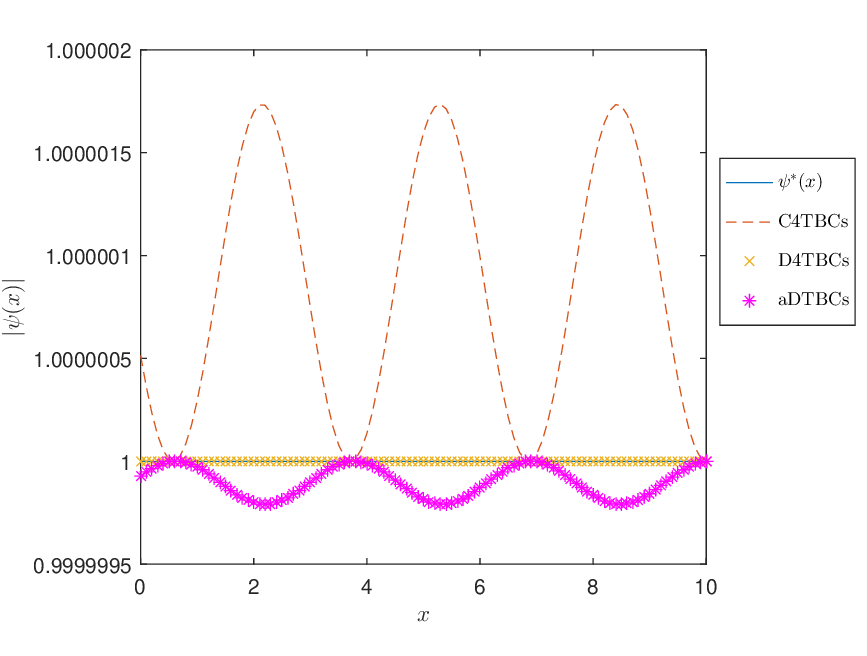}}
	\caption{Norm of numerical wave functions
		of the Schr{\"o}dinger problem, where different
		discrete boundary conditions are compared}\label{fig01}
\end{figure}

Recalling \eqref{wave_outside_boundary}, one notices
it is insufficient to only consider high order
discretizations of the TBCs, however the discrete
form of the real solution is the priority.
Based on the work originally introduced in \cite{Anton2001},
we assume discrete wave solutions for $x\leq 0$ and $x\geq L$:
\begin{equation}\label{discrete_solution_outside_region}
	\psi_j = \left\{\begin{array}{ll}
		\alpha^j,& j \leq 0, \\
		\beta^j,& j \geq N_x.
	\end{array}\right.
\end{equation}
Substituting \eqref{discrete_solution_outside_region}
in \eqref{4th_compact_schrodinger_interior},
and letting $V(x)$ vanishes for $x \notin (0,L)$,
one obtains two distinct values of $\alpha$:

\[
\alpha_\pm= \frac{t -5 (E-V_0) \pm \mathrm{i}
	\sqrt{12(E-V_0)(t - 2(E-V_0))}}{t+(E-V_0)},
\]
and two distinct values of $\beta$:
\[
\beta_\pm = \frac{t -5 (E-V_{N_x}) \pm \mathrm{i}
	\sqrt{12(E-V_{N_x})(t - 2(E-V_{N_x}))}}{t+(E-V_{N_x})}.
\]
Therefore, when $t>\max\left\{2\left(E-V_0\right),
2\left(E-V_{N_x}\right)\right\}$,
$\lv \alpha_\pm\rv = \lv \beta_\pm\rv = 1$ due to
\begin{equation}\label{alpha_beta_wave}\begin{array}{cc}
		\alpha_\pm = \de^{\pm\mathrm{i}\tilde{k}_1 \Delta x}, &
		\beta_\pm = \de^{\pm\mathrm{i}\tilde{k}_2 \Delta x},
\end{array}\end{equation}
with $\tilde{k}_1$ and $\tilde{k}_2$ the approximations
of $k_1$ and $k_2$, respectively.
The so called discrete dispersion relations
are thus obtained as

\begin{equation}\label{dispersion_alpha}
	E -V_0= \frac{t(1-\cos(\tilde{k}_1 \Delta x))}
	{5+\cos(\tilde{k}_1 \Delta x)},
\end{equation}
and
\begin{equation}\label{dispersion_beta}
	E-V_{N_x} = \frac{t(1-\cos(\tilde{k}_2 \Delta x))}
	{5+\cos(\tilde{k}_2 \Delta x)}.
\end{equation}
Taking Taylor's expansions of both right hand sides of
\eqref{dispersion_alpha} and \eqref{dispersion_beta},
one finds out

\begin{equation}\label{E_approximation}
	\begin{array}{cc}
		E - V_0= \frac{\hbar^2 \tilde{k}_1^2}{2m^*}
		+ O(\Delta x^4),&
		E - V_{N_x} = \frac{\hbar^2 \tilde{k}_2^2}{2m^*}
		+ O(\Delta x^4),
	\end{array}
\end{equation}
and subsequently,
\begin{equation}\label{k_approximation}
	\begin{array}{cc}
		\tilde{k}_1 = k_1 + O(\Delta x^4),
		& \tilde{k}_2 = k_2 + O(\Delta x^4).
	\end{array}
\end{equation}
\begin{remark}\label{remark_E_approximation}
	Actually, \eqref{E_approximation} is generalized
	to
	\begin{equation}\label{E_approximation_general}
		\begin{array}{cc}
			E -V_0 = \frac{\hbar^2 \tilde{k}_1^2}{2m^*}
			+ O(\Delta x^p),&
			E - V_{N_x} = \frac{\hbar^2 \tilde{k}_2^2}{2m^*}
			+ O(\Delta x^p),
		\end{array}
	\end{equation}
	when the discretization scheme applied to discretize
	\eqref{eq:schrodinger} has $p$th order accuracy.
	This is because when we take Taylor's expansion of
	the related scheme, and
	substitute $\phi=\de^{\mathrm{i}\tilde{k}_1 x}$, for instance, in the scheme,
	we have
	\[
	\frac{\hbar^2 \tilde{k}_1^2}{2m^*}\phi + V_0 \phi +O(\Delta x^p)\phi = E
	\phi,
	\]
	which indicates the first relation of
	\eqref{E_approximation_general}.
\end{remark}

Substituting \eqref{alpha_beta_wave} in
\eqref{discrete_solution_outside_region},
one gets discrete waves in opposite directions for $x\leq 0$
and $x\geq L$, respectively.
Making linear combinations of the above discrete waves
in opposite directions as described with
\eqref{wave_outside_boundary},
one obtains the approximation of the solution outside
the device:
\begin{equation}\label{discrete_wave_outside}
	\psi_j = \left\{
	\begin{array}{ll}
		R \alpha^{-j} + \alpha^j,
		& j \leq 0, \\
		T \beta^j ,
		&	j \geq N_x,
	\end{array}
	\right.
\end{equation}
where
\begin{equation}\label{def:alpha_beta}
	\begin{array}{cc}
		\alpha \triangleq \alpha_+, &
		\beta \triangleq \beta_+. \end{array}\end{equation}
As a result of \eqref{discrete_wave_outside},
\begin{eqnarray}\label{discrete_BC_left}
	\psi_{-1} - \alpha\psi_0 &=&
	\alpha^{-1} -\alpha, \\\label{discrete_BC_right}
	\psi_{N_x+1} -  \beta \psi_{N_x}&=& 0
\end{eqnarray}
are derived as an alternative discretization scheme
of the TBCs, besides the C4TBCs.

\begin{lemma}\label{lemma:boundary_4_order}
	Let $t > \max\left\{2\left(E-V_0\right),
	2\left( E-V_{N_x}\right)\right\}$ be fulfilled.
	\eqref{discrete_BC_left} and
	\eqref{discrete_BC_right}
	correspond fourth order discretizations
	of \eqref{eq:TBC_left} and \eqref{eq:TBC_right},
	respectively.
\end{lemma}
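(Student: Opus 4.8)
The plan is to treat the two conditions \eqref{discrete_BC_left} and \eqref{discrete_BC_right} separately, and in each case to measure the consistency error by substituting the \emph{exact} continuous solution \eqref{wave_outside_boundary} into the discrete relation and then normalising by the appropriate power of $\Delta x$. The crucial input is the already-established accuracy of the discrete wave numbers, $\tilde k_1 = k_1 + O(\Delta x^4)$ and $\tilde k_2 = k_2 + O(\Delta x^4)$ from \eqref{k_approximation}. This is available precisely because the hypothesis $t>\max\{2(E-V_0),2(E-V_{N_x})\}$ forces $\lv\alpha\rv=\lv\beta\rv=1$ and hence the oscillatory representation \eqref{alpha_beta_wave}, $\alpha=\de^{\mathrm{i}\tilde k_1\Delta x}$, $\beta=\de^{\mathrm{i}\tilde k_2\Delta x}$; without it $\alpha,\beta$ leave the unit circle, the dispersion relations degenerate, and the estimate below is unavailable.

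For the left condition I would insert $\psi(x)=r\de^{-\mathrm{i}k_1 x}+\de^{\mathrm{i}k_1 x}$ ($x\le 0$) into \eqref{discrete_BC_left}, using $x_0=0$, $x_{-1}=-\Delta x$ and $\alpha=\de^{\mathrm{i}\tilde k_1\Delta x}$. A direct computation collapses the constant and unit-incoming-amplitude contributions against the inhomogeneity $\alpha^{-1}-\alpha$ and leaves the residual
\[
\mathcal R_L := \psi(x_{-1})-\alpha\psi(x_0)-(\alpha^{-1}-\alpha) = r\bigl(\de^{\mathrm{i}k_1\Delta x}-\alpha\bigr)+\bigl(\de^{-\mathrm{i}k_1\Delta x}-\alpha^{-1}\bigr).
\]
Since $(k_1-\tilde k_1)\Delta x = O(\Delta x^5)$, each parenthesis equals $\de^{\pm\mathrm{i}\tilde k_1\Delta x}\bigl(\de^{\pm\mathrm{i}(k_1-\tilde k_1)\Delta x}-1\bigr)=O(\Delta x^5)$, so $\mathcal R_L=O(\Delta x^5)$. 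This is the defect of the true solution in the discrete relation.

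It then remains to read this $O(\Delta x^5)$ defect as a \emph{fourth}-order discretization of the differential condition \eqref{eq:TBC_left}. Expanding $\psi_{-1}$, $\psi_0$ and $\alpha=\de^{\mathrm{i}\tilde k_1\Delta x}$ to leading order shows $\psi_{-1}-\alpha\psi_0-(\alpha^{-1}-\alpha) = -\Delta x\bigl(\psi'(0)+\mathrm{i}\tilde k_1\psi(0)-2\mathrm{i}\tilde k_1\bigr)+O(\Delta x^2)$, so that the correctly normalised operator $L_h[\psi]:=-\Delta x^{-1}\bigl(\psi_{-1}-\alpha\psi_0-\alpha^{-1}+\alpha\bigr)$ is a consistent approximation of the TBC operator $\psi'(0)+\mathrm{i}k_1\psi(0)-2\mathrm{i}k_1$, again by $\tilde k_1=k_1+O(\Delta x^4)$; its sharp truncation error is $-\mathcal R_L/\Delta x=O(\Delta x^4)$, which is the claim at the left boundary. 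The right boundary is handled identically and more simply: substituting the purely outgoing wave $\psi(x)=t\de^{\mathrm{i}k_2 x}$ ($x\ge L$) into \eqref{discrete_BC_right} with $x_{N_x}=L$, $x_{N_x+1}=L+\Delta x$, $\beta=\de^{\mathrm{i}\tilde k_2\Delta x}$ gives $\mathcal R_R = t\de^{\mathrm{i}k_2 L}\bigl(\de^{\mathrm{i}k_2\Delta x}-\beta\bigr)=O(\Delta x^5)$, and $\Delta x^{-1}(\psi_{N_x+1}-\beta\psi_{N_x})$ approximates $\psi'(L)-\mathrm{i}k_2\psi(L)$ with error $\mathcal R_R/\Delta x=O(\Delta x^4)$.

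The step I expect to be the main obstacle is not any single estimate but the normalisation bookkeeping of the previous paragraph: one must confirm that the leading behaviour of the discrete relation is exactly $-\Delta x$ (respectively $\Delta x$) times the TBC operator, so that precisely one power of $\Delta x$ is absorbed when passing from the $O(\Delta x^5)$ residual to the $O(\Delta x^4)$ consistency error, and that no stray $O(\Delta x^2)$ or $O(\Delta x^3)$ term survives in the limit of the normalised operator. Everything else collapses to the single algebraic fact $\de^{\pm\mathrm{i}k_j\Delta x}-\alpha^{\pm1}=O(\Delta x^5)$ (and its $\beta$ analogue), which is an immediate consequence of the already-proven wave-number accuracy \eqref{k_approximation}.
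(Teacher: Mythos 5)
Your proposal is correct and is in substance the paper's own proof: both arguments substitute the exact wave form \eqref{wave_outside_boundary} into the discrete conditions, reduce everything to the single estimate $\de^{\pm\mathrm{i}k_1\Delta x}-\alpha^{\pm 1}=O(\Delta x^5)$ (and its $\beta$, $k_2$ analogue), which follows from \eqref{k_approximation}, and then divide by $\Delta x$ to obtain the fourth-order consistency claim. The only difference is presentational: the paper runs the Taylor expansion of $\psi(-\Delta x;k)$ carrying the operator $\psi^\prime(0;k)+\ri k_1\psi(0;k)-2\ri k_1$ explicitly through the computation (via \eqref{relation_psi_0}), so the $\Delta x$-normalisation bookkeeping you flag as the delicate step is built in automatically, whereas you compute the residual in closed form first and normalise afterwards.
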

\begin{proof}
	Taking Taylor's expansion of $\psi(-\Delta x;k)$
	at $x=0$, one sees
	\begin{equation}\label{taylor_DTBCs}
		\psi(-\Delta x;k) = \psi(0;k) - \psi^\prime(0;k)\Delta x
		+ \dfrac{\psi^{\prime\prime}(0;k)}{2!}\Delta x^2
		-\dfrac{\psi^{(3)}(0;k)}{3!}\Delta x^3
		+ \dfrac{\psi^{(4)}(0;k)}{4!}\Delta x^4+O(\Delta x^5).
	\end{equation}
	According to \eqref{wave_outside_boundary}, one gets
	\begin{equation}\label{relation_psi_0}
		\psi^{(2n)}(0;k) = (\mathrm{i}k_1)^{2n}\psi(0;k),
		\quad \psi^{(2n+1)}(0;k) = (\mathrm{i}k_1)^{2n+1}(2-\psi(0;k)).
	\end{equation}
	Substituting \eqref{relation_psi_0} in \eqref{taylor_DTBCs},
	one further obtains
	\begin{equation}\label{taylor_second}
		\begin{aligned}
			\psi(-\Delta x;k) & = -\psi^\prime(0;k)\Delta x+\left[1+\dfrac{(\mathrm{i}k_1\Delta x)^2}{2!}+\dfrac{(\mathrm{i}k_1\Delta x)^3}{3!}+\dfrac{(\mathrm{i}k_1\Delta x)^4}{4!}\right]\psi(0;k)\\
			&\quad + 2\ri\dfrac{(k_1\Delta x)^3}{3!}+O(\Delta x^5)\\
			& = -[\psi^\prime(0;k)+\ri k_1\psi(0;k)-2\ri k_1]\Delta x + \de^{\mathrm{i}k_1\Delta x}\psi(0;k) \\
			& \quad + \de^{-\mathrm{i}k_1\Delta x}-\de^{\mathrm{i}k_1\Delta x}+O(\Delta x^5).
		\end{aligned}
	\end{equation}
	According to \eqref{k_approximation},
	\begin{equation}\label{alpha_order}
		\de^{\mathrm{i}k_1\Delta x}=\alpha + O(\Delta x^5)
	\end{equation}
	is natural, and
	\begin{equation*}
		\psi^\prime(0;k)+\ri k_1\psi(0;k)-2{\mathrm{i}}k_1 =
		-\dfrac{\psi_{-1}-\alpha\psi_0-\alpha^{-1}+\alpha}{\Delta x}+O(\Delta x^4)
	\end{equation*}
	is thus obtained by substituting \eqref{alpha_order}
	in \eqref{taylor_second}.
	Similarly,
	\begin{equation*}
		\psi^\prime(L;k) - \ri k_2\psi(L;k)
		=\dfrac{\psi_{N_x+1}-\beta\psi_{N_x}}{\Delta x}
		+O(\Delta x^4)
	\end{equation*}
	could be deduced similarly.
\end{proof}

\eqref{discrete_BC_left}-\eqref{discrete_BC_right} is the so-called D4TBCs.
According to
{\textbf{Lemma~\ref{lemma:boundary_4_order}},
	one arrives the first main result of this paper, i.e.,
	\textbf{Theorem~\ref{theorem:4_order_globally}}.

	\begin{theorem}\label{theorem:4_order_globally}
		\eqref{4th_compact_schrodinger_interior}
		equipped with the {\rm{D4TBCs}} forms a globally fourth
		order discretization of the {\rm 1D} Schr{\"o}dinger
		problem. \qed
	\end{theorem}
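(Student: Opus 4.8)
The plan is to show that the local truncation error of the full discrete scheme is $O(\Delta x^4)$ at every grid node, interior and boundary alike, so that the scheme is fourth order consistent uniformly; the qualifier \emph{globally} is exactly this uniformity, in contrast with schemes whose boundary treatment degrades the interior order. I would organize the argument as (i) an interior estimate, (ii) a boundary estimate, and (iii) a synthesis, with the well-posedness of the discrete system handling the final passage from consistency to accuracy.

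First I would treat the interior. The scheme \eqref{4th_compact_schrodinger_interior} is precisely the compact second-derivative formula \eqref{Compact_4th_general_second_derivative} with $(\psi_{xx})_{i\pm1}$ replaced by $f_{i\pm1}$, a replacement that is exact for the Schr{\"o}dinger equation because there $u_{xx}=f$. Since \eqref{Compact_4th_general_second_derivative} was obtained by matching Taylor coefficients to fourth order, substituting the exact solution $\psi(\cdot;k)$ and using $f_j=\tfrac{2m^*}{\hbar^2}(V_j-E)\psi_j=(\psi_{xx})_j$ produces, after dividing by the appropriate power of $\Delta x$, a residual of size $O(\Delta x^4)$. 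Hence the interior truncation error is $O(\Delta x^4)$.

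Second, I would handle the two boundary nodes $x_0$ and $x_{N_x}$, which is exactly the content of \textbf{Lemma~\ref{lemma:boundary_4_order}}: inserting the exact solution into \eqref{discrete_BC_left}--\eqref{discrete_BC_right} and dividing by $\Delta x$ reproduces the continuous conditions \eqref{eq:TBC_left}--\eqref{eq:TBC_right} up to $O(\Delta x^4)$, and since the exact solution satisfies the TBCs, the boundary residual is $O(\Delta x^4)$. I would also note that the ghost-point relations \eqref{phi_x_0_4th_compact}--\eqref{phi_x_Nx_4th_compact} used to eliminate $\psi_{-1}$ and $\psi_{N_x+1}$ are themselves fourth order by \eqref{Compact_4th_general_first_derivative_third_form}, so no order is lost in assembling the boundary equations. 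Collecting the interior and boundary estimates then gives a uniform $O(\Delta x^4)$ local truncation error over all $j=0,1,\dots,N_x$.

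The remaining and most delicate step is to upgrade this uniform consistency to a genuinely global fourth order \emph{error} bound, which requires stability: one must ensure the inverse of the discrete operator is bounded uniformly in $\Delta x$, so that the $O(\Delta x^4)$ truncation error translates into an $O(\Delta x^4)$ global error. Here I would invoke the unique solvability of the discrete Schr{\"o}dinger problem established later in the paper, together with such a bound on the discrete inverse. This stability-to-convergence passage is the main obstacle; by contrast, if \emph{globally fourth order} is read purely as uniform fourth order consistency at interior and boundary nodes, then the theorem is an immediate synthesis of the interior construction of Sect.~\ref{subsec:model+discretization} with \textbf{Lemma~\ref{lemma:boundary_4_order}}, explaining the terse \qed that accompanies the statement.
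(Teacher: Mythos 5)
Your proposal is correct and is essentially the paper's own argument: the theorem carries its \qed{} in the statement precisely because it is the immediate synthesis you describe --- fourth order consistency of the interior compact scheme by the Taylor matching of Sect.~\ref{subsec:model+discretization}, combined with the fourth order boundary consistency supplied by Lemma~\ref{lemma:boundary_4_order}, with ``globally'' meaning that the boundary treatment does not degrade the interior order. Two minor remarks: the ghost-point relations \eqref{phi_x_0_4th_compact}--\eqref{phi_x_Nx_4th_compact} belong to the C4TBC construction and play no role in assembling the D4TBC system (there the ghost values $\psi_{-1}$, $\psi_{N_x+1}$ remain unknowns, coupled through \eqref{discrete_BC_left}--\eqref{discrete_BC_right} and the compact scheme applied at $j=0,N_x$), and the stability-to-convergence passage you flag is indeed absent from the paper --- Theorem~\ref{theorem_4th_unique} gives unique solvability for each fixed $\Delta x$ but no uniform bound on the discrete inverse --- so your reading of ``globally fourth order'' as uniform consistency at all nodes is the intended one.
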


	At the end of this section, we point out
	the set of D4TBCs is a "good" candidate
	for discretizing the TBCs,
	not only because the accuracy order of the D4TBCs
	matches that of \eqref{4th_compact_schrodinger_interior},
	more importantly, but also because the D4TBCs
	essentially avoid spurious oscillation in numerical
	solution when the potential vanishes.
	As an evidence and for comparison, we show
	norm of numerical wave function which solves
	\eqref{4th_compact_schrodinger_interior}
	equipped with D4TBCs
	in {\textbf{Fig.~\ref{fig01}}},
	and one thing is obvious that oscillation
	in the numerical solution brought with the C4TBCs
	vanishes essentially when D4TBCs is used.
	This phenomenon is not accidental,
	since $\alpha^j = \de^{\mathrm{i} \tilde{k} (j\Delta x)}$
	($\tilde{k}=\tilde{k}_1=\tilde{k}_2$)
	is the unique solution of
	\eqref{4th_compact_schrodinger_interior}
	equipped with the D4TBCs when $V(x)\equiv 0$.
	Related conclusion and necessary proof are
	stated in
	{\textbf{Theorem}~\ref{theorem_4th_unique}}.
	
	As a preparation, which is also needed to prove
	{\textbf{Theorem}~\ref{theorem_4th_unique}},
	we introduce a discrete version of integral by parts law,
	i.e., {\textbf{Lemma}~\ref{lemma:discrete_integral}}.
	\begin{lemma}\label{lemma:discrete_integral}
		Assuming $u$ and $v$ are two grid functions defined
		on $\left\{x_i : -1\leq i\leq N_x+1\right\}$,
		one has
		\begin{equation}\label{discrete_integral_by_parts}
			-\Delta x\sum_{i=0}^{N_x}(\delta_x^2 u_i)v_i =
			\Delta x\sum_{i=0}^{N_x+1}\left(\delta_x
			u_{i-\frac{1}{2}}\right)\left(\delta_x
			v_{i-\frac{1}{2}}\right)+(D_{+}u_{-1})v_{-1}-
			(D_{-}u_{N_x+1})v_{N_x+1},
		\end{equation}
		where
		\begin{equation}\label{notations}
			\begin{array}{ll}\vspace{3mm}
				\delta_x v_{i-\frac{1}{2}}=
				\dfrac{1}{\Delta x}(v_i-v_{i-1}), & \delta_x^2
				v_i=\dfrac{1}{\Delta x}(\delta_x v_{i+\frac{1}{2}}-\delta_x v_{i-\frac{1}{2}}),\\
				D_{+}v_i = \dfrac{1}{\Delta x}(v_{i+1}-v_i), & D_{-}v_i = \dfrac{1}{\Delta x}(v_i-v_{i-1}).
			\end{array}
		\end{equation}
	\end{lemma}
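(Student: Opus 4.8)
The statement is a summation-by-parts (discrete Green's) identity, so the plan is to carry out Abel summation on the left-hand side and track the boundary contributions carefully. First I would write the second difference as a difference of first differences, $\delta_x^2 u_i = \frac{1}{\Delta x}(\delta_x u_{i+\frac{1}{2}} - \delta_x u_{i-\frac{1}{2}})$, so that the left-hand side becomes $-\sum_{i=0}^{N_x}(\delta_x u_{i+\frac{1}{2}} - \delta_x u_{i-\frac{1}{2}})v_i$, with the factor $\Delta x$ cancelling. This exposes the telescoping structure in the half-integer index.

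Next I would split this into two sums and shift the index in the second one by one, rewriting $\sum_{i=0}^{N_x}(\delta_x u_{i-\frac{1}{2}})v_i = \sum_{i=-1}^{N_x-1}(\delta_x u_{i+\frac{1}{2}})v_{i+1}$. Subtracting and recombining over the overlapping range $0 \le i \le N_x-1$ produces the bulk term $\sum_{i=0}^{N_x-1}(\delta_x u_{i+\frac{1}{2}})(v_i - v_{i+1})$ together with two unmatched endpoint terms, namely $(\delta_x u_{N_x+\frac{1}{2}})v_{N_x}$ from the top of the first sum and $-(\delta_x u_{-\frac{1}{2}})v_0$ from the bottom of the shifted sum. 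Using $v_i - v_{i+1} = -\Delta x\,\delta_x v_{i+\frac{1}{2}}$, the bulk term is precisely $\Delta x\sum_{i=0}^{N_x-1}(\delta_x u_{i+\frac{1}{2}})(\delta_x v_{i+\frac{1}{2}})$, so the left-hand side equals this bulk sum plus $(\delta_x u_{-\frac{1}{2}})v_0 - (\delta_x u_{N_x+\frac{1}{2}})v_{N_x}$.

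It then remains to reconcile the index ranges and identify the boundary terms. Rewriting the target sum $\sum_{i=0}^{N_x+1}(\delta_x u_{i-\frac{1}{2}})(\delta_x v_{i-\frac{1}{2}})$ as $\sum_{i=-1}^{N_x}(\delta_x u_{i+\frac{1}{2}})(\delta_x v_{i+\frac{1}{2}})$ shows that it exceeds my bulk sum by exactly the two products at the indices $-\frac{1}{2}$ and $N_x+\frac{1}{2}$. I would then verify, using the defining relations $D_{+}u_{-1} = \delta_x u_{-\frac{1}{2}}$ and $D_{-}u_{N_x+1} = \delta_x u_{N_x+\frac{1}{2}}$ together with $\Delta x\,\delta_x v_{-\frac{1}{2}} = v_0 - v_{-1}$ and $\Delta x\,\delta_x v_{N_x+\frac{1}{2}} = v_{N_x+1} - v_{N_x}$, that the leftover combination $(\delta_x u_{-\frac{1}{2}})v_0 - (\delta_x u_{N_x+\frac{1}{2}})v_{N_x}$ minus those two extra product terms collapses exactly to $(D_{+}u_{-1})v_{-1} - (D_{-}u_{N_x+1})v_{N_x+1}$.

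The computation is entirely elementary; the only place demanding care is the bookkeeping of the half-integer indices and the endpoint terms, where an off-by-one slip would misplace the boundary contributions. I would guard against this by checking the two boundaries separately and confirming that each telescopes cleanly, since the identities $D_{+}u_{-1} = \delta_x u_{-\frac{1}{2}}$ and $D_{-}u_{N_x+1} = \delta_x u_{N_x+\frac{1}{2}}$ are precisely what make the cancellations at the two ends exact.
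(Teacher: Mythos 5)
Your proof is correct and follows essentially the same route as the paper's: a single summation by parts (index shift), identification of the telescoped boundary terms, and the observation that $D_{+}u_{-1}=\delta_x u_{-\frac{1}{2}}$ and $D_{-}u_{N_x+1}=\delta_x u_{N_x+\frac{1}{2}}$ — the only difference being that the paper shifts the $\delta_x u_{i+\frac{1}{2}}$ sum so the bulk sum lands directly on the range $i=0,\dots,N_x+1$ with boundary terms at $v_{-1}$ and $v_{N_x+1}$, whereas you shift the other sum, obtain a bulk sum over $i=0,\dots,N_x-1$ with leftover terms at $v_0$ and $v_{N_x}$, and reconcile afterwards, which is equivalent bookkeeping. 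One caveat: as literally written, your intermediate decomposition (bulk term $\sum_{i=0}^{N_x-1}\left(\delta_x u_{i+\frac{1}{2}}\right)\left(v_i-v_{i+1}\right)$ with endpoint terms $+\left(\delta_x u_{N_x+\frac{1}{2}}\right)v_{N_x}$ and $-\left(\delta_x u_{-\frac{1}{2}}\right)v_0$) is the \emph{negative} of the left-hand side, and the subsequent claim that this bulk term equals $+\Delta x\sum_{i=0}^{N_x-1}\left(\delta_x u_{i+\frac{1}{2}}\right)\left(\delta_x v_{i+\frac{1}{2}}\right)$ carries a second, compensating sign slip — the two cancel, so the final expression you state for the left-hand side and the collapse of the leftover terms to $(D_{+}u_{-1})v_{-1}-(D_{-}u_{N_x+1})v_{N_x+1}$ are exactly right, but the intermediate displays should have their signs fixed.
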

	\label{app:proof_discrete_integral_by_parts}
	\begin{proof}
		Clearly,
		\begin{equation*}
			\begin{aligned}
				-\Delta x \sum_{i=0}^{N_x}\left(\delta_x^2 u_i\right) v_i
				&=  -\sum_{i=0}^{N_x}\left(\delta_x u_{i+\frac{1}{2}}
				-\delta_x u_{i-\frac{1}{2}}\right) v_i \\
				&= \sum_{i=0}^{N_x}\left(\delta_x u_{i-\frac{1}{2}}\right) v_i
				-\sum_{i=1}^{N_x+1}\left(\delta_x u_{i-\frac{1}{2}}\right)
				v_{i-1}\\
				&=\sum_{i=0}^{N_x+1}\left(\delta_x u_{i-\frac{1}{2}}\right)
				\left(v_i-v_{i-1}\right)+\left(\delta_x u_{-\frac{1}{2}}\right)
				v_{-1}-\left(\delta_x u_{N_x+\frac{1}{2}}\right) v_{N_x+1} \\
				&= \Delta x \sum_{i=0}^{N_x+1}\left(\delta_x
				u_{i-\frac{1}{2}}\right)\left(\delta_x
				v_{i-\frac{1}{2}}\right)+\left(D_{+} u_{-1}\right) v_{-1}-\left(D_{-}
				u_{N_x+1}\right) v_{N_x+1}.
			\end{aligned}
		\end{equation*}
\end{proof}}
\begin{theorem}\label{theorem_4th_unique}
	Let $t > \max\left\{2\left(E-V_0\right),
	2\left( E-V_{N_x}\right)\right\}$ be fulfilled.
	\eqref{4th_compact_schrodinger_interior} equipped with
	the {\rm{D4TBCs}} has unique solution for arbitrary $V(x)$.
\end{theorem}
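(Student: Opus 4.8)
The plan is the standard uniqueness-via-homogeneous-problem argument. The scheme \eqref{4th_compact_schrodinger_interior} extended to the boundary nodes $j=0$ and $j=N_x$ (which is how the ghost values $\psi_{-1},\psi_{N_x+1}$ enter, exactly as in the derivation of the D4TBCs) together with \eqref{discrete_BC_left}--\eqref{discrete_BC_right} is a square linear system in $\psi_{-1},\dots,\psi_{N_x+1}$, so it suffices to show its homogeneous version has only the trivial solution. The only inhomogeneity lives in \eqref{discrete_BC_left}, so the homogeneous boundary relations are $\psi_{-1}=\alpha\psi_0$ and $\psi_{N_x+1}=\beta\psi_{N_x}$, and I must deduce $\psi_j\equiv0$. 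First I would recast the compact stencil as a genuine three-point Laplacian: setting $g_j=\psi_j-\frac{1}{\lambda}f_j$ with $f_j=\frac{2m^*}{\hbar^2}(V_j-E)\psi_j$, a short computation using $\lambda\Delta x^2=12$ shows that \eqref{4th_compact_schrodinger_interior} is \emph{equivalent} to $\delta_x^2 g_j=f_j$ for $j=0,\dots,N_x$. Moreover, since the extension \eqref{V_extend} gives $V_{-1}=V_0$ and $V_{N_x+1}=V_{N_x}$, the scaling relating $g$ to $\psi$ coincides at each ghost node and its neighbour, so the homogeneous D4TBCs become $g_{-1}=\alpha g_0$ and $g_{N_x+1}=\beta g_{N_x}$.

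Next I would run the discrete energy argument on $g$. Multiplying $\delta_x^2 g_j=f_j$ by $\Delta x\,\overline{g_j}$, summing over $j=0,\dots,N_x$, and applying {\textbf{Lemma~\ref{lemma:discrete_integral}}} with $u=g$, $v=\overline{g}$ produces an identity whose bulk term is $-\Delta x\sum_{j=0}^{N_x+1}|\delta_x g_{j-\frac12}|^2$ plus the boundary contributions $(D_+g_{-1})\overline{g_{-1}}$ and $(D_-g_{N_x+1})\overline{g_{N_x+1}}$. The crucial observation is that $\Delta x\sum_j f_j\overline{g_j}$ is real, because $f_j\overline{g_j}=\frac{2m^*}{\hbar^2}(V_j-E)|\psi_j|^2-\frac{1}{\lambda}|f_j|^2$ with $V_j,E$ real. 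Taking imaginary parts thus annihilates the bulk sum and the right-hand side, leaving only the boundary terms. Using $|\alpha|=|\beta|=1$ with $\alpha=\de^{\ri\tilde k_1\Delta x}$ and $\beta=\de^{\ri\tilde k_2\Delta x}$ from \eqref{alpha_beta_wave} (unimodular precisely because the hypothesis $t>\max\{2(E-V_0),2(E-V_{N_x})\}$ holds), the homogeneous boundary relations give $\mathrm{Im}\big[(D_+g_{-1})\overline{g_{-1}}\big]=-\frac{1}{\Delta x}\sin(\tilde k_1\Delta x)|g_0|^2$ and the analogous expression on the right, so the imaginary part of the identity collapses to
\[
\frac{1}{\Delta x}\Big(\sin(\tilde k_1\Delta x)\,|g_0|^2+\sin(\tilde k_2\Delta x)\,|g_{N_x}|^2\Big)=0.
\]
Since the dispersion relations \eqref{dispersion_alpha}--\eqref{dispersion_beta} together with $0<E-V_0<t/2$ and $0<E-V_{N_x}<t/2$ place $\tilde k_1\Delta x,\tilde k_2\Delta x$ in $(0,\pi)$, both sines are strictly positive, whence $g_0=g_{N_x}=0$ and therefore $g_{-1}=g_{N_x+1}=0$.

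Finally I would propagate these vanishing endpoints through the three-term recurrence $\delta_x^2 g_j=f_j$ to obtain $g_j\equiv0$, and then recover $\psi_j=g_j+\frac{1}{\lambda}f_j=0$ from $f_j=\delta_x^2 g_j=0$. The main obstacle is exactly the non-self-adjointness introduced by the compact stencil: testing the original scheme directly against $\overline{\psi_j}$ leaves a nonvanishing bulk imaginary contribution $\propto\sum_j(V_j-V_{j+1})\,\mathrm{Im}(\psi_j\overline{\psi_{j+1}})$ coming from the $[1,10,1]$ mass weighting of the potential, so the naive energy identity fails for non-constant $V$. The substitution $g=\psi-\frac1\lambda f$ is what both restores the clean Laplacian structure required by {\textbf{Lemma~\ref{lemma:discrete_integral}}} and renders $\sum_j f_j\overline{g_j}$ real, thereby recovering the cancellation of the bulk imaginary part. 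A secondary point requiring care is that the leading coefficient of the recurrence is proportional to $t+E-V_{j+1}$, which could in principle vanish for an exceptional $V$; this case is harmless, since there $g_j$ is forced to be $0$ and the zeros propagated from both endpoints still force $g\equiv0$, after which $f_j=0$ again yields $\psi_j=0$.
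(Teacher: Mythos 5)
The bulk of your argument is correct, and it is essentially the paper's own proof in streamlined form. Your substitution $g_j=\psi_j-\frac{1}{\lambda}f_j$ turns the scheme into $\delta_x^2 g_j=f_j$, which is literally the paper's \eqref{compact_second}; and where the paper forms \emph{two} summation identities (testing against $\psi_j^*$ and against $\tfrac{\Delta x^2}{12}f_j^*$, then cancelling the mixed bulk terms via $B_1=B_2^*$), you obtain the same cancellation in one stroke by testing against $\overline{g_j}$, using that $f_j\overline{g_j}$ is real. Your sine identity is the paper's \eqref{eq:image2} in disguise (the paper's $g_0,g_{N_x}$ there are positive weights, not your grid function), and your conclusion $g_0=g_{N_x}=0$ is equivalent to the paper's \eqref{psi_equal_zero}. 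Up to that point your proof is sound and matches the paper.

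The genuine gap is the final propagation step, exactly at the point you declared harmless. Zeros propagate through $\delta_x^2 g_j=f_j$ only while $f_j$ is expressible through $g_j$, i.e.\ while $t+E-V_j\neq 0$; at an exceptional node, $g_j=\frac{t+E-V_j}{t}\psi_j$ vanishes identically and carries no information about $\psi_j$ (hence none about $f_j$), so each one-sided sweep stops there. With one exceptional node the two sweeps still cover every index, but with two they leave the nodes in between uncontrolled, and the statement itself can fail there. Concretely, write \eqref{4th_compact_schrodinger_interior} (scaled by $\tfrac{\hbar^2}{2m^*}$) as
\[
(t+E-V_{j-1})\psi_{j-1}-\bigl(2t+10(V_j-E)\bigr)\psi_j+(t+E-V_{j+1})\psi_{j+1}=0,
\]
fix $2\le j_0\le N_x-2$, and choose $V_{j_0\pm1}=E+t$ and $V_{j_0}=E-\tfrac{t}{5}$ (all other $V_j$ arbitrary, with the hypothesis at $j=0,N_x$ respected). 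Then $\psi_{j_0}=10$, $\psi_{j_0\pm1}=1$, $\psi_j=0$ otherwise is a \emph{nontrivial} solution of the homogeneous system \eqref{compact_second}--\eqref{homogeneous_BC_right}: at $j=j_0\pm2$ the only nonzero neighbour is multiplied by $t+E-V_{j_0\pm1}=0$; at $j=j_0\pm1$ the equation reads $\frac{6t}{5}\cdot 10-(2t+10t)\cdot 1=0$; at $j=j_0$ it reads $-(2t-2t)\cdot 10=0$; and $\psi_{-1}=\psi_0=\psi_{N_x}=\psi_{N_x+1}=0$, so the homogeneous D4TBCs hold. This kernel element has $\psi_{-1}=\psi_{N_x+1}=0$, so it is perfectly consistent with the energy argument; in your variables it has $g_{j_0\pm1}=0$ but $g_{j_0}=12\neq 0$, refuting the claim that the endpoint zeros force $g\equiv 0$.

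Two remarks for context. First, the paper is no better off: after \eqref{psi_equal_zero} it simply asserts that the system ``admits a unique solution,'' which the example above shows to be false for genuinely arbitrary $V$; so the theorem's ``arbitrary $V(x)$'' needs a caveat no matter whose proof one reads. Second, your argument does become a complete proof under the mild extra hypothesis $t+E-V_j\neq 0$ at every interior node (then $f_j$ is a multiple of $g_j$ everywhere and a single left-to-right sweep finishes); this holds for any bounded $V$ once $\Delta x$ is small, since $t=\frac{6\hbar^2}{m^*\Delta x^2}$.
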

\begin{proof}
	Using \eqref{notations}, \eqref{4th_compact_schrodinger_interior}
	and the {\rm{D4TBCs}} are rewritten into
	\begin{eqnarray*}
		\delta_x^2 \psi_j -\dfrac{\Delta x^2}{12}\delta_x^2 f_j
		&=& f_j,
		\quad  j = 0,1,\cdots, N_x,\\
		\psi_{-1}-\alpha\psi_0 &=& \alpha^{-1}-\alpha,\\
		\psi_{N_x+1}-\beta \psi_{N_x}&=&0.
	\end{eqnarray*}
	Obviously, uniqueness of the solution of
	\eqref{4th_compact_schrodinger_interior}
	equipped with the {\rm{D4TBCs}} is equivalent to
	\begin{eqnarray}\label{compact_second}
		\delta_x^2 \psi_j -\dfrac{\Delta x^2}{12}\delta_x^2 f_j &=& f_j,
		\quad  j=0,1,\cdots,N_x,\\ \label{homogeneous_BC_left}
		\psi_{-1}-\alpha\psi_0 &=& 0,\\ \label{homogeneous_BC_right}
		\psi_{N_x+1}-\beta \psi_{N_x}&=&0
	\end{eqnarray}
	only has a trivial solution.
	
	Multiplying \eqref{compact_second}
	by $-\Delta x \psi_j^*$, where $\psi_j^*$ denotes
	by the conjugate of $\psi_j$,
	and summing all equations with respect to $j$ for $j\in
	\{0,1,\cdots,N_x\}$, we obtain
	\begin{equation}\label{theorem2_equ00}
		-\Delta x\sum_{j=0}^{N_x}(\delta_x^2\psi_j)\psi_j^* + \dfrac{\Delta x^2}{12}\Delta x\sum_{j=0}^{N_x}(\delta_x^2f_j)\psi^*_j =-\Delta x\sum_{j=0}^{N_x}f_j\psi_j^*.
	\end{equation}
	Applying \eqref{discrete_integral_by_parts} to the two summations
	on the left of \eqref{theorem2_equ00}, one reorganizes the
	result as
	\begin{equation}\label{theorem2_equ01}
		A_1 = B_1 + C_1,
	\end{equation}
	where
	\begin{eqnarray*}
		A_1 &=& \Delta x\sum_{j= 0}^{N_x+1}\left(\delta_x\psi_{j-\frac{1}{2}}\right)\left(\delta_x\psi^*_{j-\frac{1}{2}}\right)+\Delta x\sum_{j=0}^{N_x}f_j\psi_j^*,\\
		B_1 &=& \dfrac{\Delta x^2}{12}\Delta x\sum_{j=0}^{N_x+1}\left(\delta_x f_{j-\frac{1}{2}}\right)\left(\delta_x\psi^*_{j-\frac{1}{2}}\right),\\
		C_1 &=& \dfrac{\Delta x^2}{12}\left[(D_{+}f_{-1})\psi^*_{-1}-(D_{-}f_{N_x+1})\psi^*_{N_x+1} \right]-(D_{+}\psi_{-1})\psi^*_{-1}+(D_{-}\psi_{N_x+1})\psi^*_{N_x+1}.
	\end{eqnarray*}
	At the same time, we multiply both sides
	of \eqref{compact_second}
	with $ \frac{\Delta x^2}{12}(-\Delta x)f_j^*$,
	sum the yielded equations again with respect to $j$
	for $j\in\{0,1,\cdots,N_x\}$, and obtain
	\begin{equation}\label{theorem2_equ02}
		A_2  = B_2 + C_2,
	\end{equation}
	where
	\begin{eqnarray*}
		A_2 &=& \dfrac{\Delta x^4}{144} \Delta x\sum_{j= 0}^{N_x+1}\left(\delta_x f_{j-\frac{1}{2}}\right)\left(\delta_x f^*_{j-\frac{1}{2}}\right)-\dfrac{\Delta x^2}{12}\Delta x\sum_{j=0}^{N_x}f_j f_j^*,\\
		B_2 &=& \dfrac{\Delta x^2}{12}\Delta x\sum_{j=0}^{N_x+1}\left(\delta_x \psi_{j-\frac{1}{2}}\right)\left(\delta_x f^*_{j-\frac{1}{2}}\right),\\
		C_2 &=& \dfrac{\Delta x^2}{12}\left[(D_{+}\psi_{-1}) f^*_{-1}-(D_{-}\psi_{N_x+1}) f^*_{N_x+1} \right]-\dfrac{\Delta x^4}{144}\left[(D_{+}f_{-1})f^*_{-1}-(D_{-}f_{N_x+1})f^*_{N_x+1}\right].
	\end{eqnarray*}
	Obviously, $A_1,A_2\in\mathbb{R}$ and $B_1 = B_2^*$,
	which indicates
	\begin{equation}\label{eq:image}
		\mathrm{Im}(C_1+C_2) = 0.
	\end{equation}
	By using \eqref{homogeneous_BC_left}-\eqref{homogeneous_BC_right}, one concludes that \eqref{eq:image} is equivalent to
	\begin{equation}\label{eq:image2}
		\mathrm{Im}\left(g_0|\psi_{-1}|^2\alpha^{-1}
		+g_{N_x}|\psi_{N_x+1}|^2\beta^{-1}\right)=0,
	\end{equation}
	where
	\begin{equation*}
		\begin{array}{cc}
			g_0 = \left(\frac{V_0-E}{t}-1\right)^2 > 0 ,&
			g_{N_x} =  \left(\frac{V_{N_x}-E}{t}-1\right)^2 > 0,
		\end{array}
	\end{equation*}
	when $t >
	\max\left\{2 \left( E-V_0\right), 2\left( E-V_{N_x}\right)\right\}$.
	Recalling \eqref{def:alpha_beta}, one notices
	the imaginary parts
	of $\alpha$ and $\beta$ have the same signs, thus
	\begin{equation*}
		g_0|\psi_{-1}|^2 = g_{N_x}|\psi_{N_x+1}|^2=0,
	\end{equation*}
	which implies
	\begin{equation}\label{psi_equal_zero}
		\psi_{-1}=\psi_{N_x+1}=0.
	\end{equation}
	Substituting \eqref{psi_equal_zero} in
	\eqref{compact_second}-\eqref{homogeneous_BC_right}, one
	finds out \eqref{compact_second}-\eqref{homogeneous_BC_right}
	admits a unique solution, which concludes the proof of the theorem.
\end{proof}

\subsection{The aDTBCs}
Based on the discussions made in  Sect.~\ref{subsec:essentially_non_oscillating},
pros and cons of the D4TBCs are quite clear.
In one aspect, the accuracy order of the D4TBCs
is consistent with the order of the interior scheme,
which is delightful.
More importantly, spurious oscillation
in numerical solution is avoided, when
$V(x) \equiv 0$.
However, in the other aspect, an extra discrete dispersion
relation is needed via solving an algebraic
equation, with the order of the algebraic equation
increased as the accuracy order of discretization scheme
for the Schr{\"o}dinger equation increases.
At the same time, the dispersion
relation brought numerical error, so does
the D4TBCs consequently.

Actually, TBCs could be discretized without introducing
discretization error, as long as $\psi(x;k)$ is analytic near
physical boundaries.
In the rest of this section, we always assume
$\psi(x;k)$ be analytic near both physical boundaries.
Taking \eqref{eq:TBC_left} for instance,
we use Taylor series with infinite terms
to accurately express $\psi(-\Delta x;k)$:
\begin{equation*}
	\begin{aligned}
		\psi(-\Delta x;k) &= \psi(0;k) - \psi^\prime(0;k) \Delta x
		+ \dfrac{\psi^{\prime\prime}(0;k)}{2!}\Delta x^2
		-\dfrac{\psi^{(3)}(0;k)}{3!}\Delta x^3+\dfrac{\psi^{(4)}(0;k)}{4!}\Delta x^4\\
		& \quad -\dfrac{\psi^{(5)}(0;k)}{5!}\Delta x^5+\cdots\\
		& = \left[1+\dfrac{(\ri k_1\Delta x)^2}{2!}+\dfrac{(\ri k_1\Delta x)^3}{3!}
		+\dfrac{(\ri k_1\Delta x)^4}{4!}+\dfrac{(\ri k_1\Delta x)^5}{5!}+\cdots\right]\psi(0;k)\\
		& \quad -\Delta x\psi^\prime(0;k) -2\ri\left[-\dfrac{(k_1\Delta x)^3}{3!}
		+\dfrac{(k_1\Delta x)^5}{5!}-\cdots\right]\\
		& = -\Delta x\left[\psi^\prime(0;k)+\ri k_1\psi(0;k)-2\ri k_1\right]
		+\de^{\ri k_1\Delta x}\psi(0;k)-2\ri \sin(k_1\Delta x),
	\end{aligned}
\end{equation*}
where \eqref{relation_psi_0} is noticed.
As a result, a discretization scheme for
\eqref{eq:TBC_left} without discretization error is yielded as
\begin{equation}\label{eq:DTBC_real_left}
	\psi_{-1} - \de^{\mathrm{i} k_1 \Delta x} \psi_0
	+ 2\mathrm{i} \sin\left(k_1 \Delta x\right) = 0,
\end{equation}
where $k_1$ and $E$ satisfy the exact dispersion
relation, i.e., $k_1 = \sqrt{\frac{2m^*(E-V_0)}{\hbar}}$.
Similarly, the analytic discretization of \eqref{eq:TBC_right}
reads
\begin{equation}\label{eq:DTBC_real_right}
	\psi_{N_x+1}-\de^{\mathrm{i}k_2\Delta x}\psi_{N_x}=0,
\end{equation}
which does not introduce any discretization error, too.
\eqref{eq:DTBC_real_left}-\eqref{eq:DTBC_real_right} is a specific example of the aDTBCs. Clearly, the overall
accuracy order of \eqref{4th_compact_schrodinger_interior}
equipped with the aDTBCs totally depends on
the appearance of the discretization scheme applied
to discretize the Schr{\"o}dinger equation.
The result of the uniqueness of the
solution of \eqref{4th_compact_schrodinger_interior}
equipped with the {\rm{aDTBCs}} is provided
in {\textbf{Theorem}~\ref{theorem_aDTBCs_unique}},
where the proof is omitted due to its similarity
with the proof of {\textbf{Theorem}~\ref{theorem_4th_unique}}.

\begin{theorem}\label{theorem_aDTBCs_unique}
	Let $t > \max\left\{2 \left( E-V_0\right), 2\left( E-V_{N_x}\right)\right\}$ be
	fulfilled.
	\eqref{4th_compact_schrodinger_interior} equipped with
	the {\rm{aDTBCs}} has unique solution for
	arbitrary $V(x)$. \qed
\end{theorem}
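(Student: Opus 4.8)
The plan is to follow the energy-method argument used for \textbf{Theorem~\ref{theorem_4th_unique}} almost verbatim, the only structural change being that the phases $\alpha$ and $\beta$ of the D4TBCs are replaced by the exact factors $\de^{\ri k_1\Delta x}$ and $\de^{\ri k_2\Delta x}$ of the aDTBCs \eqref{eq:DTBC_real_left}--\eqref{eq:DTBC_real_right}. First I would reduce uniqueness to showing that the homogeneous system---the interior scheme \eqref{compact_second} together with the homogeneous conditions $\psi_{-1}-\de^{\ri k_1\Delta x}\psi_0=0$ and $\psi_{N_x+1}-\de^{\ri k_2\Delta x}\psi_{N_x}=0$---admits only the trivial solution. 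I would then form the two summation identities exactly as in \eqref{theorem2_equ01} and \eqref{theorem2_equ02}: multiply \eqref{compact_second} by $-\Delta x\,\psi_j^*$ and by $\tfrac{\Delta x^2}{12}(-\Delta x)f_j^*$, sum over $j=0,\dots,N_x$, and apply the discrete integration-by-parts identity \eqref{discrete_integral_by_parts}. Because these manipulations never invoke the boundary relations, the conclusions $A_1,A_2\in\mathbb{R}$ and $B_1=B_2^*$ transfer unchanged, so that $\mathrm{Im}(C_1+C_2)=0$ as in \eqref{eq:image}.

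The boundary conditions enter only through the reduction of the boundary term $C_1+C_2$. Here I would use that $V$ is constant outside $[0,L]$ by \eqref{V_extend}, so $f_{-1}=\tfrac{2m^*}{\hbar^2}(V_0-E)\psi_{-1}$ and $f_{N_x+1}=\tfrac{2m^*}{\hbar^2}(V_{N_x}-E)\psi_{N_x+1}$, whence the difference quotients $D_+f_{-1}$ and $D_-f_{N_x+1}$ collapse to multiples of $D_+\psi_{-1}$ and $D_-\psi_{N_x+1}$. Substituting the homogeneous aDTBCs to eliminate $\psi_0=\de^{-\ri k_1\Delta x}\psi_{-1}$ and $\psi_{N_x}=\de^{-\ri k_2\Delta x}\psi_{N_x+1}$, the boundary term reduces, up to the positive factor $1/\Delta x$, to the analogue of \eqref{eq:image2}, namely $\mathrm{Im}\!\left(g_0|\psi_{-1}|^2\de^{-\ri k_1\Delta x}+g_{N_x}|\psi_{N_x+1}|^2\de^{-\ri k_2\Delta x}\right)=0$, with the \emph{same} coefficients $g_0=\left(\tfrac{V_0-E}{t}-1\right)^2>0$ and $g_{N_x}=\left(\tfrac{V_{N_x}-E}{t}-1\right)^2>0$. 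These coefficients depend only on the interior scheme and the constancy of $V$, not on the phase relating $\psi_{-1}$ to $\psi_0$, so this computation is identical to the one in \textbf{Theorem~\ref{theorem_4th_unique}}.

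The final step replaces the observation that $\mathrm{Im}\,\alpha$ and $\mathrm{Im}\,\beta$ share a sign by a direct appeal to the exact dispersion relation. Since $\mathrm{Im}(\de^{-\ri k_1\Delta x})=-\sin(k_1\Delta x)$ and likewise for $k_2$, I would note that $E-V_0=\tfrac{\hbar^2k_1^2}{2m^*}$ and $t=\tfrac{\hbar^2}{2m^*}\lambda$ give $k_1^2\Delta x^2=12(E-V_0)/t$, so the hypothesis $t>\max\{2(E-V_0),2(E-V_{N_x})\}$ forces $0<k_1\Delta x,\,k_2\Delta x<\sqrt{6}<\pi$ and hence $\sin(k_1\Delta x),\sin(k_2\Delta x)>0$. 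The two terms of the reduced identity then carry the same sign, which forces $g_0|\psi_{-1}|^2=g_{N_x}|\psi_{N_x+1}|^2=0$, i.e. $\psi_{-1}=\psi_{N_x+1}=0$. Feeding this back into \eqref{compact_second} through the homogeneous aDTBCs gives $\psi_0=\psi_{N_x}=0$ and leaves a homogeneous interior system whose only solution is trivial, exactly as in the closing step of \textbf{Theorem~\ref{theorem_4th_unique}}.

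I expect the only genuinely new bookkeeping---and therefore the main obstacle---to be confirming that the reduction of $C_1+C_2$ reproduces precisely the positive coefficients $g_0,g_{N_x}$ once the aDTBCs phases are inserted in place of $\alpha,\beta$; this is the one calculation that must be redone term by term. Everything else is structurally identical to \textbf{Theorem~\ref{theorem_4th_unique}}, which is why omitting the proof ``by similarity'' is justified: the essential new input is merely that the exact wavenumbers inherit $k\Delta x<\sqrt{6}<\pi$ from the very same $t$-inequality, so the sign argument still closes.
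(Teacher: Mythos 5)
Your proposal is correct and is precisely the adaptation the paper intends when it omits this proof as ``similar'' to that of \textbf{Theorem~\ref{theorem_4th_unique}}: the energy identities \eqref{theorem2_equ01}--\eqref{theorem2_equ02} and the coefficients $g_0,g_{N_x}$ are untouched by the choice of boundary phase, and the only new ingredient is replacing the sign argument for $\mathrm{Im}\,\alpha^{-1},\mathrm{Im}\,\beta^{-1}$ by the observation that the $t$-hypothesis forces $k_{1,2}\Delta x<\sqrt{6}<\pi$, hence $\sin(k_{1,2}\Delta x)>0$, which you supply correctly. The one step you gloss over---that the residual interior system with $\psi_{-1}=\psi_0=\psi_{N_x}=\psi_{N_x+1}=0$ is only trivially solvable---is glossed over at exactly the same level in the paper's own proof of \textbf{Theorem~\ref{theorem_4th_unique}}, so your write-up matches the paper's argument in both substance and rigor.
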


We make comparisons between the two discretization schemes:
\eqref{4th_compact_schrodinger_interior} equipped with
the D4TBCs and the aDTBCs, respectively, by comparing
their accuracy orders via solving the example studied in
 Sect.~\ref{subsec:essentially_non_oscillating}.
Numerical results given in {\textbf{Table~\ref{table1}} show
	that fourth order accuracy is globally achieved by
	\eqref{4th_compact_schrodinger_interior}
	equipped with either one of the D4TBCs and the aDTBCs.

	\begin{table}
	\caption{Convergence orders of	\eqref{4th_compact_schrodinger_interior}equipped with the D4TBCs and the aDTBCs, respectively}
	\label{table1}       
	\centering
	\begin{tabular}{llll}
	\hline\noalign{\smallskip}
	BCs   & $N_x = 200$  & $N_x= 400$  &$N_x = 800$  \\
	\noalign{\smallskip}\hline\noalign{\smallskip}
	D4TBCs    &4.00043 & 3.99986& 4.00085  \\
	aDTBCs   &4.00187  & 4.00137  &4.00107 \\
	\noalign{\smallskip}\hline
	\end{tabular}
	\end{table}
	
	As depicted in {\textbf{Fig.}~\ref{fig01}},
	spurious oscillation exists in the numerical solution
	when $V\equiv 0$, too, when aDTBCs is applied.
	However, apparent reduction in amplitude
	of the spurious oscillation occurs, when the
	aDTBCs is compared to the C4TBCs.
	More importantly, it is not clear whether spurious oscillation
	still exists when $V(x)\nequiv 0$, by using any of boundary
	discretization schemes discussed in this paper, including
	the D4TBCs.
	
	Thanks to the aDTBCs, one is able to improve the
	overall accuracy of the discretization
	scheme for the Schr{\"o}dinger problem to arbitrarily high order.
	Applying \eqref{Compact_Scheme_general_second_derivative}
	to discretize \eqref{eq:schrodinger} at grid points
	$x_i$, $i=0,1,\cdots,N_x$, one finds out it is
	necessary to express $\left(\psi_{xx}\right)_i$ and
	$\psi_i$ for $i<0$ and $i>N_x$ with interior or
	with boundary nodal values of $\psi$
	to close the equations.
	Noticing $\psi$ solves \eqref{eq:schrodinger}
	outside the device with \eqref{V_extend} satisfied,
	i.e., $\left(\psi_{xx}\right)_{i<0(i>N_x)} =
	\frac{2m^*}{\hbar^2}(V_{i<0(i>N_x)}-E)\psi$,
	where $V_{i<0}=V_0$ and $V_{i>N_x}=V_{N_x}$,
	one observes unknowns outside the device region
	really needed to be
	expressed are $\{\psi_j,j=-1,\cdots,-L\}$ and
	$\{\psi_{N_x+j},j=1,\cdots,L\}$, with $L= \max\{l,s\}$.
	Similar to the way that adopted in deriving
	\eqref{eq:DTBC_real_left},
	we consider Taylor series of $\psi(-j \Delta x;k)$,
	$j=1,2,\cdots,L$, with infinite terms,
	and deduce two sets of accurate
	discretizations of the TBCs, which read
	\begin{equation}\label{exterior_left}
		\psi_{-j} - \de^{\mathrm{i}k_1(j\Delta x)}\psi_0 + 2 \mathrm{i}
		\sin\left(k_1 (j\Delta x)\right) = 0, \quad j=1,2,\cdots,L,
	\end{equation}
	and
	\begin{equation}\label{exterior_right}
		\psi_{N_x+j} - \de^{\mathrm{i}k_2(j\Delta x)}\psi_{N_x}  = 0,
		\quad j=1,2,\cdots,L.
	\end{equation}
	\eqref{exterior_left} and \eqref{exterior_right} stand for a 
	general framework of the aDTBCs, with which the overall accuracy  
	of discretization for the 1D Schr{\" o}dinger problem
	could be improved to arbitrarily high levels. 
	
	However, DTBCs consistent with arbitrarily high order
	interior discretization
	are essentially impossible to be deduced for $L\geq 3$.
	The reason for the above assertion is that to derive the
	required DTBCs, one needs to analytically solve a
	high order(at least $6$th order)
	algebraic equation, in order to obtain the
	corresponding discrete dispersion
	relations, which is theoretically impossible.
	Therefore, the aDTBCs could be used as an alternative
	choice besides DTBCs, when $V\nequiv 0$ in
	simulating realistic devices or high order schemes
	are desired.

	\section{The iterative scheme for solving the
		Schr{\"o}dinger-Poisson problem}\label{sec:Algorithm}
	Firstly, we instantiate \eqref{model_problem} with
	the Poisson problem.
	Applying \eqref{model_equation_compact} to discretize
	\eqref{eq:poisson} at interior grid points, one obtains
	
	\begin{equation}\label{Poisson_discretization}
		\lambda (V_{s})_{i-1}-f_{i-1}-\left[2\lambda
		(V_{s})_{i}+10f_i\right]+\lambda
		(V_{s})_{i+1}-f_{i+1}=0, \cdots, i=1,\cdots,N_x-1,
	\end{equation}
	where $(V_{s})_i$ denotes the approximation
	of $V_{s}(x_i)$, and
	\begin{equation}\label{def:f_i}
		f_i = \frac{q_e^2}{\eps}(N_d(x_i)-n_i),
	\end{equation}
	where $n_i$ is the approximation of $n(x)$ at $x_i$.
	According to \eqref{total_density}, the density of electron
	is defined as an integral on the real line,
	thus truncation of the integral is necessary
	in numerical calculations.
	For $E_F$ used in all of the numerical experiments,
	one finds out $F(E_F-E) $, which is given by \eqref{fermi}, is less than $10^{-10}$ when $E>0.8(\rm eV)$.
	Therefore, we truncate
	the integral range into $[0,0.8](\rm eV)$ and
	calculate $n_i$ via the
	adaptive Simpson quadrature routine\cite{1969Notes}.
	
	One further applies
	\eqref{Compact_4th_general_first_derivative_third_form} to
	discretize \eqref{poisson_bc} at $x_0$ and $x_{N_x}$, respectively,
	and obtains the corresponding discretization
	schemes in the form of
	\begin{equation}\label{poisson_dbc_left}
		\frac{(V_{s})_1-(V_{s})_{-1}}{2\Delta x}
		- \frac{\Delta x}{12} \left(f_1-f_{-1}\right)=0,
	\end{equation}
	and
	\begin{equation}\label{poisson_dbc_right}
		\frac{(V_{s})_{N_x+1}-(V_{s})_{N_x-1}}{2\Delta x}
		- \frac{\Delta x}{12} \left(f_{N_x+1}-f_{N_x-1}\right)=0.
	\end{equation}
	Applying
	\eqref{Poisson_discretization} to discretize
	\eqref{eq:poisson} at $x_0$ and $x_{N_x}$,
	and combining the results with
	\eqref{poisson_dbc_left} and
	\eqref{poisson_dbc_right} respectively, one closes
	\eqref{Poisson_discretization} with fourth
	order discretization of \eqref{poisson_bc}, i.e.,
	\begin{equation}\label{poisson_dbc}
		\begin{array}{ccc}
			-2\lambda (V_{s})_0 + 2\lambda (V_{s})_1
			&=& -f_{-1}+10f_0+3f_1 ,\\
			2\lambda (V_{s})_{N_x-1}-2\lambda
			(V_{s})_{N_x} &=& 3f_{N_x-1}+10f_{N_x}-f_{N_x+1}.
		\end{array}
	\end{equation}
	where $f_{-1}$ and $f_{N_x+1}$ are defined according to
	\eqref{def:f_i}, and approximated with the adaptive
	Simpson quadrature mentioned previously,
	where $\psi_{-1}$ and $\psi_{N_x+1}$ involved in the
	integrals are determined through the D4TBCs or
	the aDTBCs.
	As a result, a globally fourth order discretization of
	the 1D Schr{\"o}dinger-Poisson problem
	is proposed by equipping
	\eqref{4th_compact_schrodinger_interior}
	with the D4TBCs or the aDTBCs, and coupling
	\eqref{Poisson_discretization} equipped with
	\eqref{poisson_dbc}.
	For the convenience of subsequent discussion,
	we name \eqref{4th_compact_schrodinger_interior}
	with the D4TBCs and the aDTBCs as the DS1 and
	the DS2, respectively, and the whole 1D
	discrete Schr{\"o}dinger-Poisson
	system involving the D4TBCs and the aDTBCs
	as the DSP1 and the DSP2, respectively.

	The details of the simulation process are given in \textbf{Algorithm~\ref{iteration_scheme}}}.

\begin{algorithm}\label{iteration_scheme}
	
	\item[1] Make an initial guess about $V_{s}(x)$.
	For example, the setup is $V_{s}(x)\equiv 0$
	for RTD simulation.
	\item[2] For any given energy $E$, solve
	\eqref{4th_compact_schrodinger_interior}
	equipped with either the {\rm{D4TBCs}} or the {\rm{aDTBCs}}
	to obtain corresponding discrete wave functions.
	\item[3] If $V(x)$ is known a {\textit{priori}},
	then terminate the iteration.
	Otherwise, calculate $n_i$, $i=-1,0,\cdots,N_x+1$,
	via the adaptive Simpson quadrature routine, where discrete wave functions
	corresponding to plenty of sampling values
	of $E$ should be solved repeatedly according to step 2.
	Then apply Newton-Raphson iteration \cite{2003nanoMOS,1997Single} to solve
	\eqref{Poisson_discretization} equipped with
	\eqref{poisson_dbc}.
	The criterion to end this Newton-Raphson iteration is
	\begin{equation}\label{criterion}\| (\bV_{s})^{(p+1)}
		- (\bV_{s})^{(p)}\|_{L^\infty} \leq 10^{-10},\end{equation}
	where $p$ denotes the iteration step.
	\item[4] Repeat step 2 and step 3 till the numerical solution of the
	Schr{\"o}dinger-Poisson problem converges, where
	the criterion for the convergence is the same as
	that used in the 3rd step, where $p$ is the
	iteration step of the whole iteration.
\end{algorithm}

\section{Numerical experiments}\label{sec:numerical_experiments}
In this section, we use several examples, including
a short $n^{++}-n^+-n^{++}$
resistor and two RTDs, to validate
theoretical accuracy orders of the discretizations,
and the ability of {\bf Algorithm
	\ref{iteration_scheme}} in simulating quantum devices.

\subsection{A one dimensional
	$n^{++}-n^+-n^{++}$ resistor}\label{section:toymodel}
\begin{figure}[htbp]
	\centering
	\includegraphics[width=0.8\textwidth]{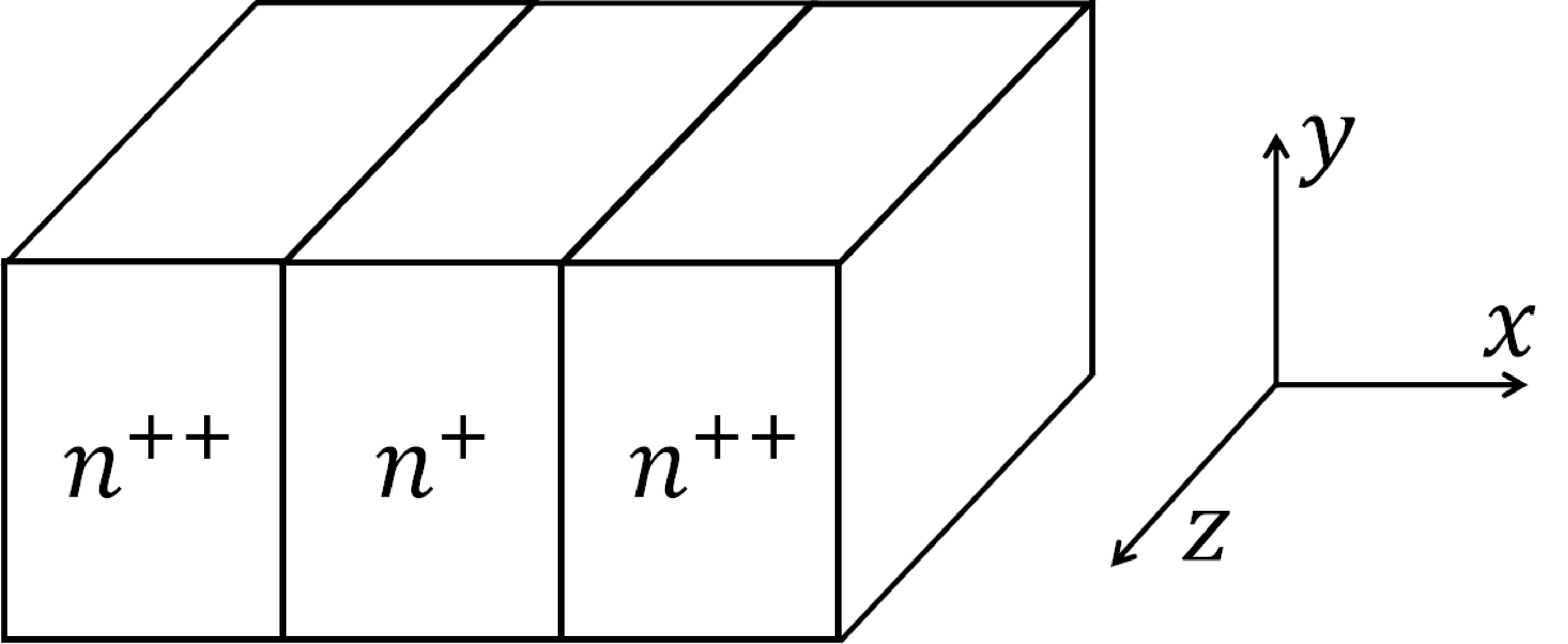}
	\caption{Skeleton of a three dimensional
		$n^{++}-n^+-n^{++}$ resistor, where
		the transport of electrons in the x-direction
		is of concern}\label{fig:device}
\end{figure}

In \figref{fig:device}, the skeleton of a short
three dimensional $n^{++}-n^+-n^{++}$ resistor is shown.
We further assume all physical quantities involved
are homogeneous in both the $y$-direction and the
$z$-direction, thus the problem could be simplified
as simulation of a one dimensional resistor, where
the appearance of electrons in the $x$-direction
is of concern.
Parameters are set as that used in \cite{2000Nanoscale},
which read: $m = 0.25 m_0$,
where $m_0 = 9.1\times 10^{-31}\ (\rm Kg)$,
$\varepsilon = 10\varepsilon_0$, where
$\varepsilon_0 = 8.85\times 10^{-12}\
(\rm F\ m^{-1})$, $T_L = 300\ (\rm K)$,
and $E_F = 0.318\ (\rm eV)$. The device is
$30\ (\rm nm)$ long in total and  discretized
with a uniform grid with $100$ grid points.
$V_b\equiv 0$ and the doping profile reads
\begin{equation}
	N_d(x) = \left\{
	\begin{array}{ll}
		10^{20}\ \rm cm^{-3},\quad & x \in [0,4.5]\cup [25.5,30](\rm nm),\\
		5\times 10 ^{19}\ \rm cm^{-3},& x\in (4.5,25.5)(\rm nm).\\
	\end{array}
	\right.
\end{equation}

The simulation is carried out following {\bf Algorithm
	\ref{iteration_scheme}}, where the numerical results are
in \figref{fig:toymodel},
\figref{fig:error}, \figref{fig:toymodel_IV},
and {\textbf{Table~\ref{table:convergence}}.
	Appearance of {\bf Algorithm
		\ref{iteration_scheme}} in simulating the
	1D $n^{++}-n^+-n^{++}$ resistor is studied, by taking
	results obtained via solving the problem
	yielded by applying the second order NEGF method\cite{2000Nanoscale} as benchmarks.
	Numerical results about the potential functions and
	densities of electrons are shown in \figref{fig:toymodel}
	with $V_{ds}=0\ \rm (V)$ and $V_{ds}=0.25\ \rm (V)$,
	respectively.
	Vividly, curves simulated following
	\textbf{ Algorithm~\ref{iteration_scheme}} match
	that solved with the second order NEGF method.

	\begin{figure}[htbp]
		\centering  
		\subfigure[$V_{ds} = 0\ \rm (V)$]{
			\label{Fig.sub.1}
			\includegraphics[width=0.45\textwidth]{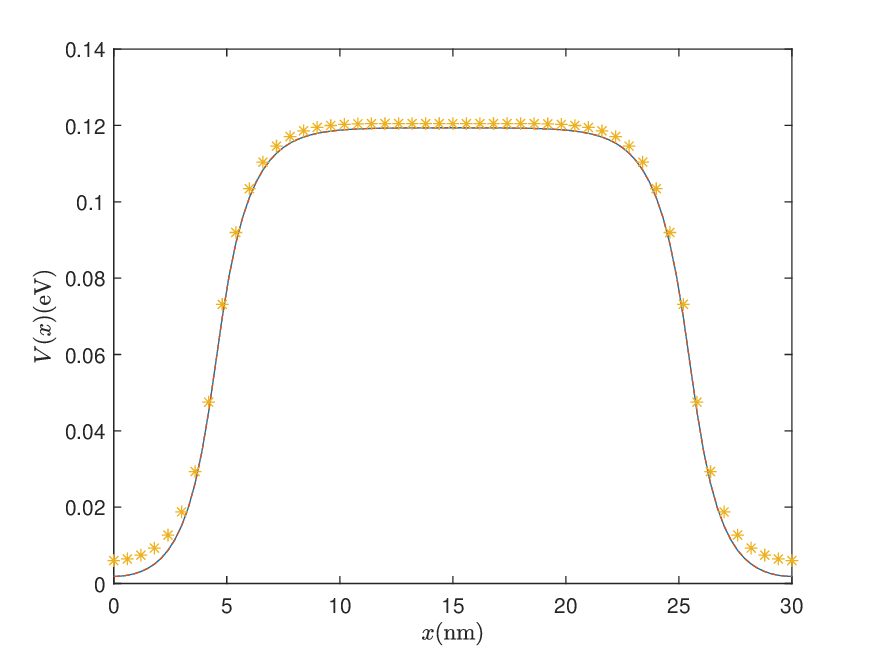}}
		\subfigure[$V_{ds} = 0\ \rm (V)$]{
			\label{Fig.sub.2}
			\includegraphics[width=0.45\textwidth]{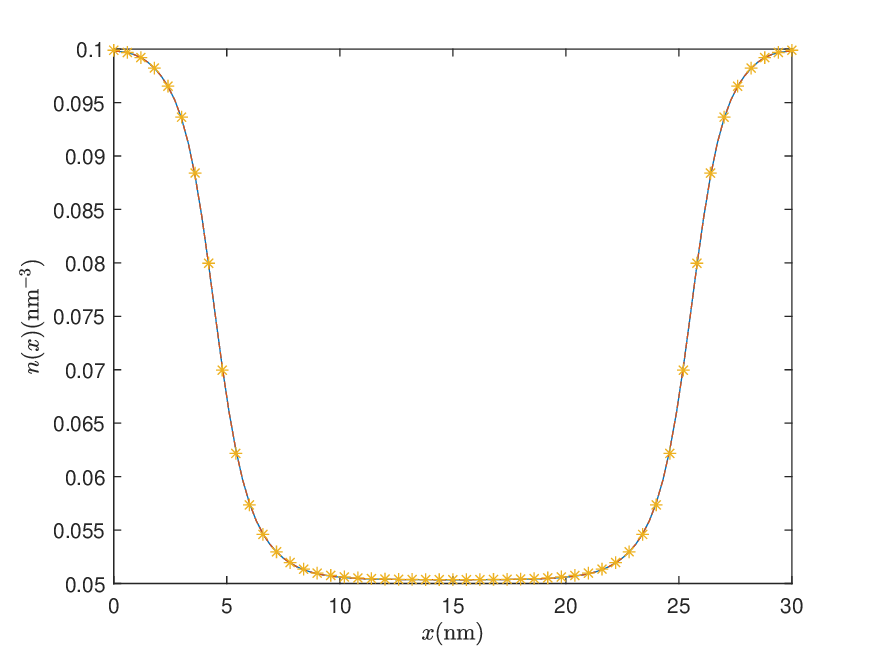}}
		\subfigure[$V_{ds} = 0.25\ \rm (V)$]{
			\label{Fig.sub.3}
			\includegraphics[width=0.45\textwidth]{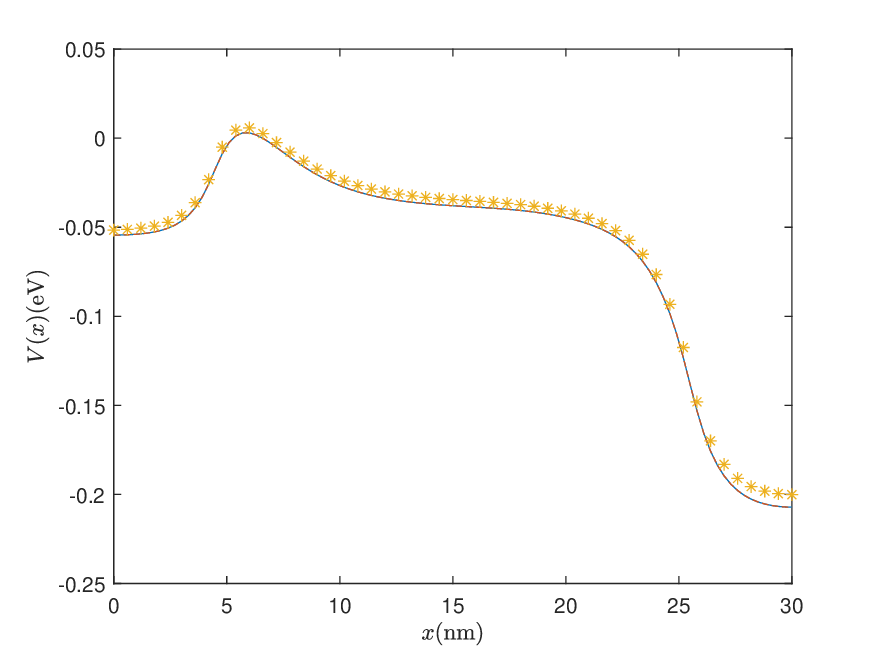}}
		\subfigure[$V_{ds} = 0.25\ \rm (V)$]{
			\label{Fig.sub.4}
			\includegraphics[width=0.45\textwidth]{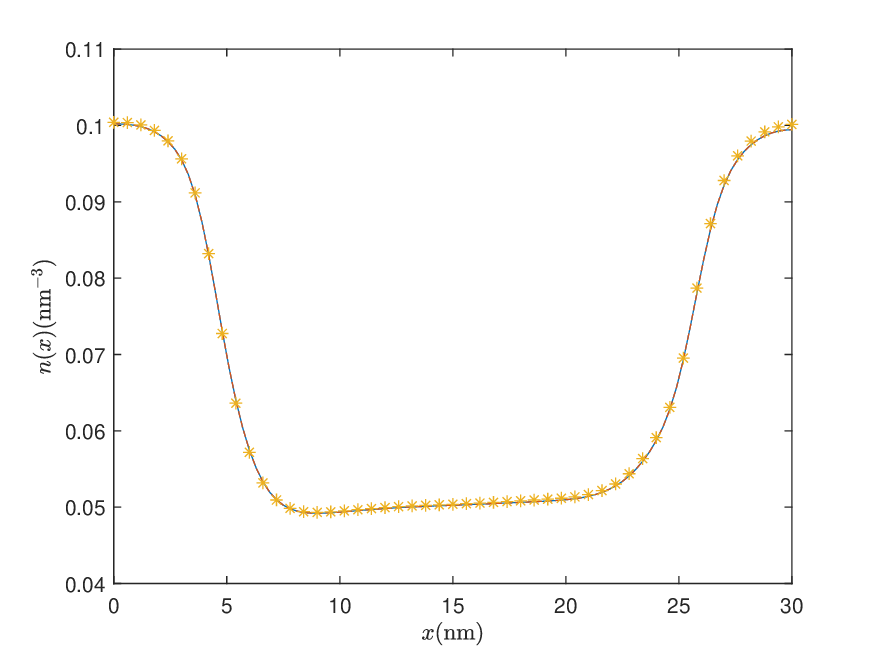}}
		\caption{Numerical results about the
			potential functions $V(x)$(on the left)
			and density
			functions $n(x)$(on the right)
			simulated with two samplings of
			$V_{ds}$, i.e., $0\ \rm (V)$ and $0.25\ \rm (V)$,
			respectively.
			The DSP1(in blue solid line)
			and the DSP2(in red dash line) are both considered,
			for both values of $V_{ds}$, separately.
			Corresponding results(in star) solved with
			the second order NEGF method are shown as benchmarks}
		\label{fig:toymodel}
	\end{figure}

	In order to examine the
	theoretical accuracy orders of the DSP1 and the DSP2,
	respectively, we smooth the doping density function
	with
	\begin{equation}\label{eq:convolution}
		\phi(x) = \dfrac{5\de^{5x}}{(1+\de^{5x})^2},
	\end{equation}
	and $V_{ds} = 0\ \rm (V)$ during the simulation.
	We define error vector $\boldsymbol{\delta} \bV_{\Delta x}$ as the
	absolute error of $\bV_{\Delta x}$ with respect to $\bV^*$,
	where $\bV_{\Delta x}$ denotes the potential function solved on
	a grid with grid space $\Delta x$, and $\bV^*$ is the projection
	of $\bV_{L/1600}$ in the space where $\bV_{\Delta x}$ lies in.
	The error function of the numerical solution is defined as
	\begin{equation}\label{def:error}
		E_{\Delta x} = \max \boldsymbol{\delta} \bV_{\Delta x},
	\end{equation}
	and the accuracy order is estimated by calculating
	$\log_2\left(\frac{E_{\Delta x}}{E_{\Delta x/2}}\right)$
	with gradually halved $\Delta x$.
	Numerical results about the convergence behavior
	and convergence orders of DSP1 and DSP2 are
	separately shown in \figref{fig:error} and
	{\textbf{Table~\ref{table:convergence}}}, which
	both indicate good convergence of them.

	\begin{table}
	\caption{Convergence orders of DSP1 and DSP2}
	\label{table:convergence}       
	\centering
	\begin{tabular}{lllll}
	\hline\noalign{\smallskip}
	Method   & $N_x = 100$  & $N_x= 200$  &$N_x = 400$  & $N_x=800$  \\
	\noalign{\smallskip}\hline\noalign{\smallskip}
	DSP1    &4.0475   & 3.9906 & 4.0073  & 4.0815\\
	DSP2    &  4.0178  & 3.9997  &4.0052   &4.0859 \\
	\noalign{\smallskip}\hline
	\end{tabular}
	\end{table}
	
	\begin{figure}[htbp]
		\centering  
		\subfigure[DSP1]{
			\label{Fig4.sub.a}
			\includegraphics[width=0.45\textwidth]{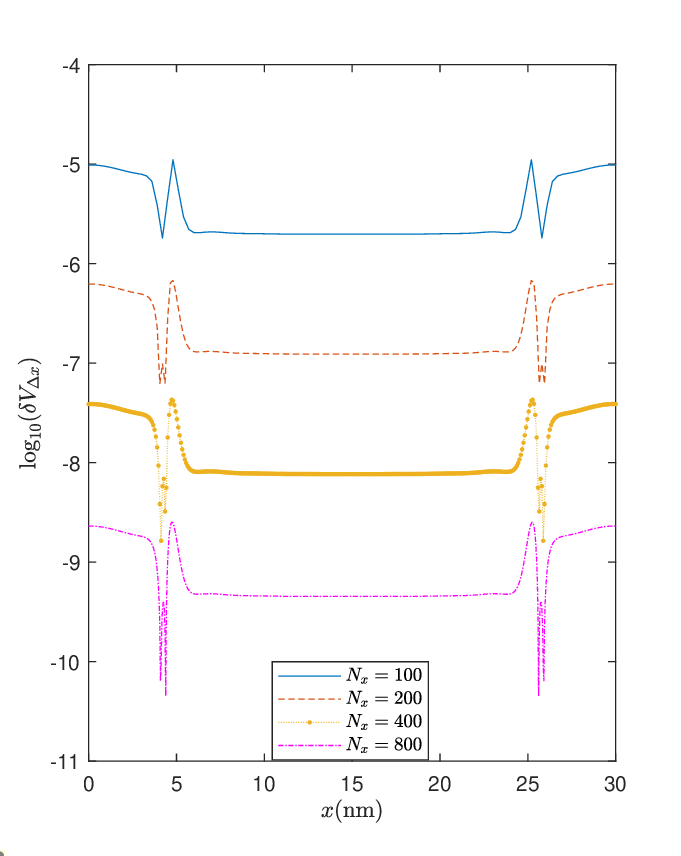}}
		\subfigure[DSP2]{
			\label{Fig4.sub.b}
			\includegraphics[width=0.45\textwidth]{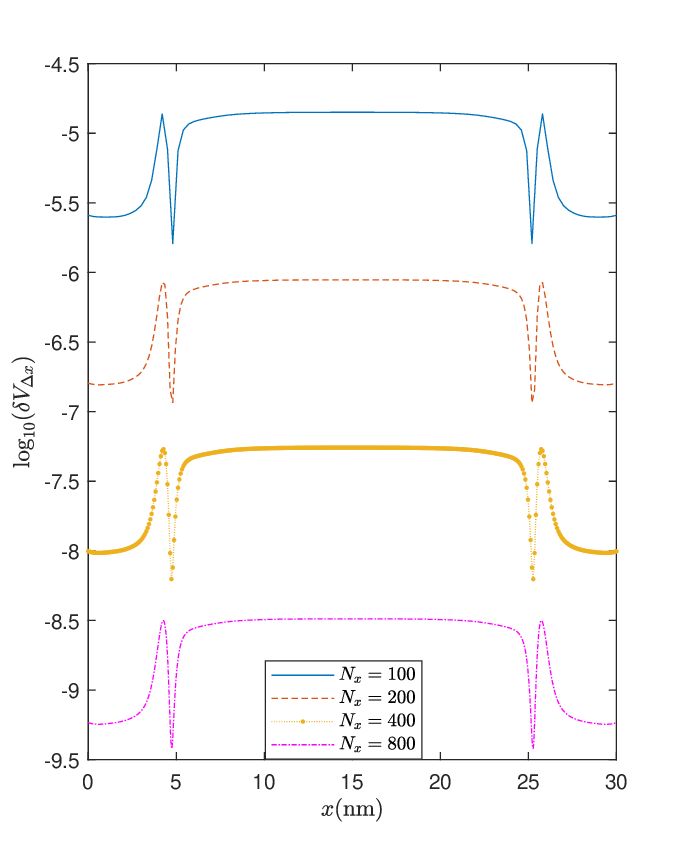}}
		\caption{Tendency of $\log_{10}\left(\boldsymbol{\delta} \bV_{\Delta x}\right)$
			with respect to decreasing $\Delta x$, where
			the DSP1(a) and the DSP2(b) are considered
			separately}
		\label{fig:error}
	\end{figure}
	
	At last, we show the I-V characteristic curves of the
	1D $n^{++}-n^+-n^{++}$ resistor in \figref{fig:toymodel_IV},
	which are simulated with {\textbf{Algorithm~\ref{iteration_scheme}}},
	by considering the DSP1 and the DSP2 in the algorithm,
	respectively.
	Constant resistance values are observed
	in both simulations, which is a characteristic
	of classic devices.
	Furthermore, convergence of I-V characteristic curves
	with respect to decreasing grid sizes indicates
	convergence of both the DSP1 and the DSP2.

	\begin{figure}[htbp]
		\centering  
		\subfigure[DSP1]{
			\label{Fig5.sub.a}
			\includegraphics[width=0.45\textwidth]{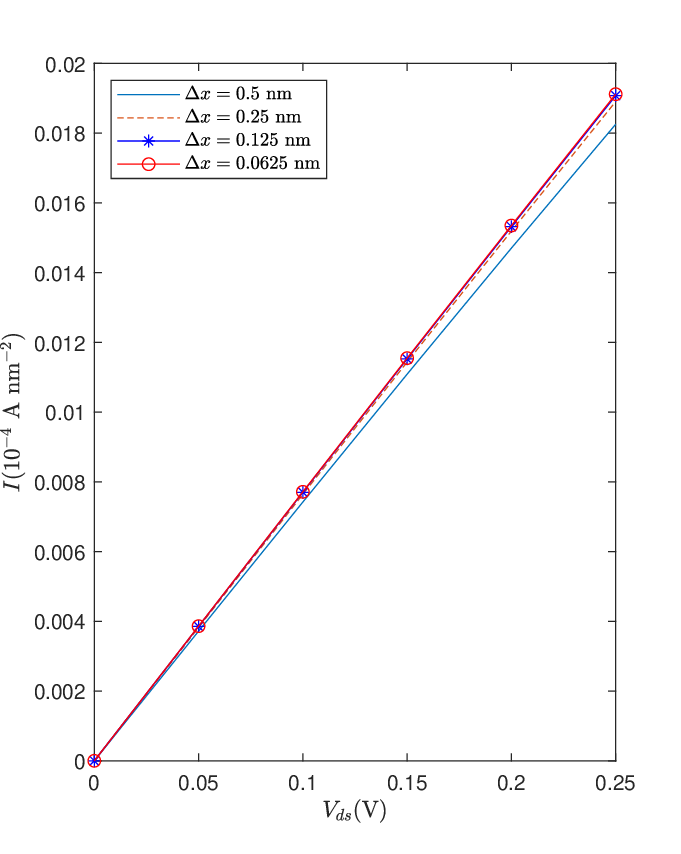}}
		\subfigure[DSP2]{
			\label{Fig5.sub.b}
			\includegraphics[width=0.45\textwidth]{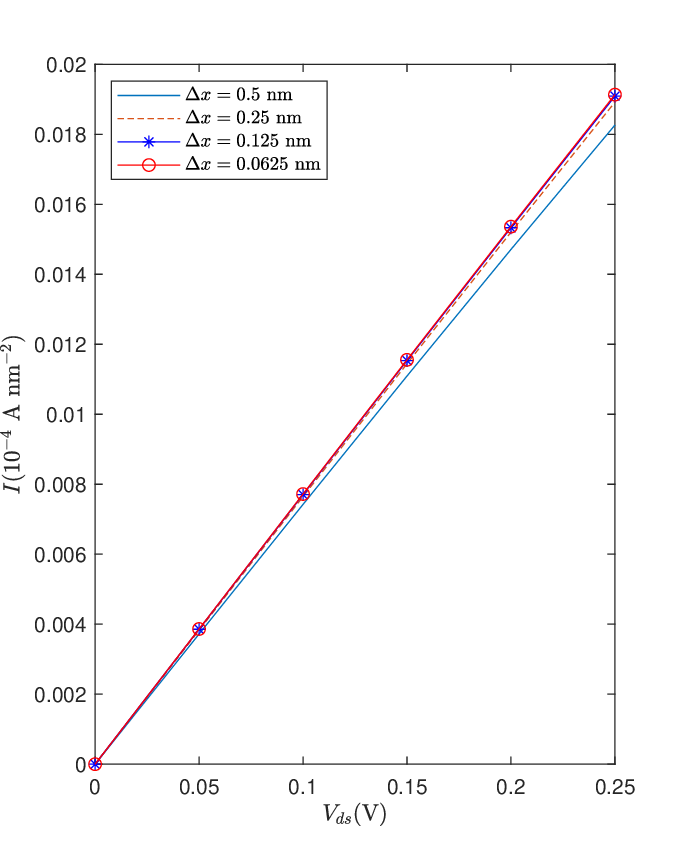}}
		\caption{I-V characteristic curves of the 1D
			resistor simulated with decreasing $\Delta x$,
			where the DSP1(a) and the DSP2(b) are
			considered, respectively}
		\label{fig:toymodel_IV}
	\end{figure}

	\subsection{RTDs}
	In this section, we study the RTD device with the
	same structure as what was studied in
	\cite{pinaud2002transient}. The skeleton of the RTD is shown in \figref{fig:RTD}.
	\begin{figure}[htbp]
		\centering
		\includegraphics[width=\textwidth]{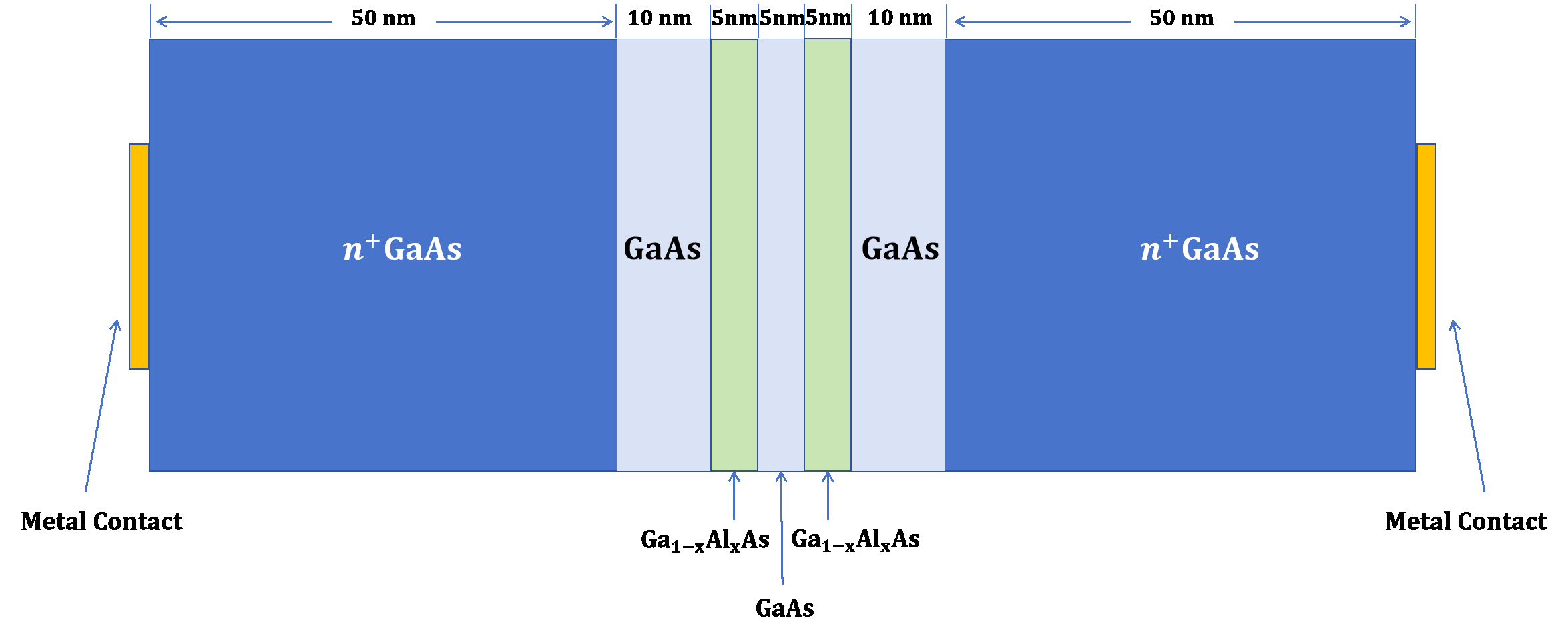}
		\caption{Skeleton of the RTD
		}\label{fig:RTD}
	\end{figure}
	The RTD device is $135(\rm nm)$ long in total,
	and is doped according to
	\begin{equation*}
		N_d(x) = \left\{
		\begin{array}{ll}
			10^{18}\ \rm cm^{-3},\quad & x \in [0,50]\cup [85,135](\rm nm),\\
			5\times 10 ^{15}\ \rm cm^{-3},& x\in (50,85)(\rm nm),\\
		\end{array}
		\right.
	\end{equation*}
	which is smoothed out in all of the simulations.
	The rest of parameters read: $m = 0.067m_0$, where
	$m_0 = 9.1\times 10^{-31}\ (\rm Kg)$,
	$\varepsilon=11.44\varepsilon_0$, where
	$\varepsilon_0 = 8.85\times 10 ^{-12}\ (\rm F\ m^{-1})$,
	and $E_F = 0.0427\ (\rm eV)$.
	
	A double barrier potential with the profile of
	\begin{equation*}
		V_{b}(x) = \left\{
		\begin{array}{ll}
			V_0,\quad & x \in [60,65]\cup [70,75](\rm nm),\\
			0,\quad & x \in  [0, 60)\cup (65,70)\cup (75,135](\rm nm),\\
		\end{array}
		\right.
	\end{equation*}
	is contained within the device,
	where $V_0$ is a prescribed height of the double-barrier
	and is set to $0.3\ \rm (eV)$ in the following experiments.
	The other component of the potential function $V(x)$,
	i.e., $V_s(x)$ is either set in advance(RTD A), or studied
	with a Poisson equation self-consistently(RTD B).
	
	\subsubsection{RTD A}
	\label{sec:subsec:RTDA}
	In this example, $V_s(x)$ is a continuous function
	set in advance,
	which reads
	\begin{equation}
		V_s(x) =     \left\{
		\begin{array}{ll}
			0,\quad & x \in [0,50](\rm nm),\\
			-q_eV_{ds}\dfrac{x-50}{35},\quad & x \in  (50,85)(\rm nm),\\
			-q_eV_{ds},\quad & x \in [85,135](\rm nm).
		\end{array}
		\right.
	\end{equation}
	We smooth $V(x)$ with \eqref{eq:convolution} in order to conduct accuracy order tests.
	In this example, $V_{ds}$ is increased from zero,
	and by $\Delta V_{ds} = 0.02\ \rm (V)$ each time.
	We simulate the I-V characteristic curve
	of RTD A with {\textbf{Algorithm~\ref{iteration_scheme}}},
	and consider the D4TBCs and the aDTBCs
	in applying the algorithm.
	Results about the I-V characteristic I-V curve
	are shown in \figref{fig:linearIV},
	for both discrete boundary conditions, respectively.
	Obviously, current densities solved with both
	of the D4TBCs and the aDTBCs,
	show sharp peaks with similar heights around $0.18\ \rm (V)$.
	Furthermore, all of these I-V characteristic curves
	show negative resistance,
	which corresponds to one of the most typical
	characteristics of quantum devices.
	To test the convergence of both the DS1 and the DS2,
	we use gradually refined mesh grids to simulate
	I-V characteristic curves, where the finest
	grid space is $0.5\ \rm (nm)$.
	Clearly, fast convergence of I-V curves is observed in
	both simulations with the DS1 and the DS2, respectively.
	
	\begin{figure}[htbp]
		\centering  
		\subfigure[DS1]{
			\label{Fig7.sub.a}
			\includegraphics[width=0.45\textwidth]{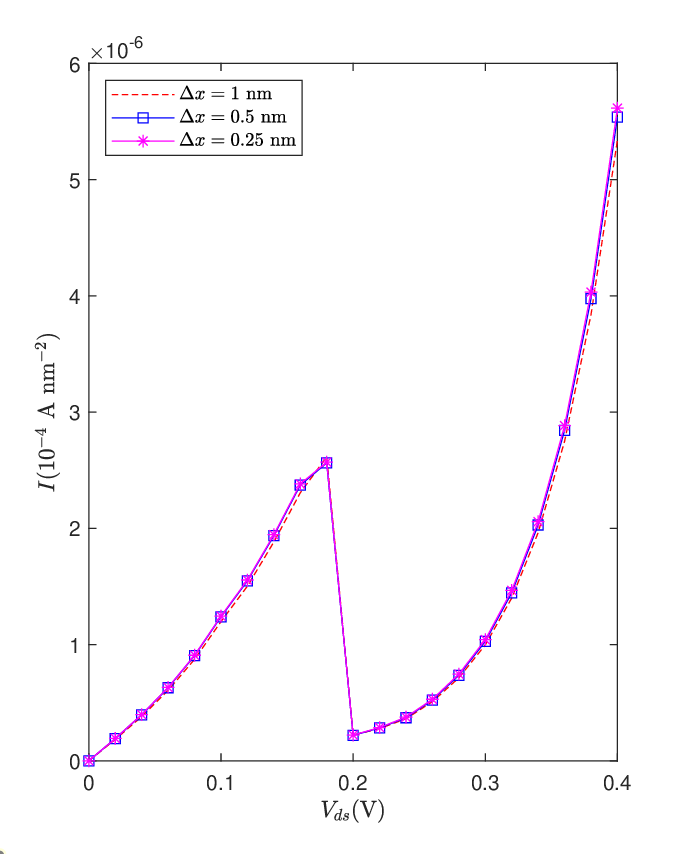}}
		\subfigure[DS2]{
			\label{Fig7.sub.b}
			\includegraphics[width=0.45\textwidth]{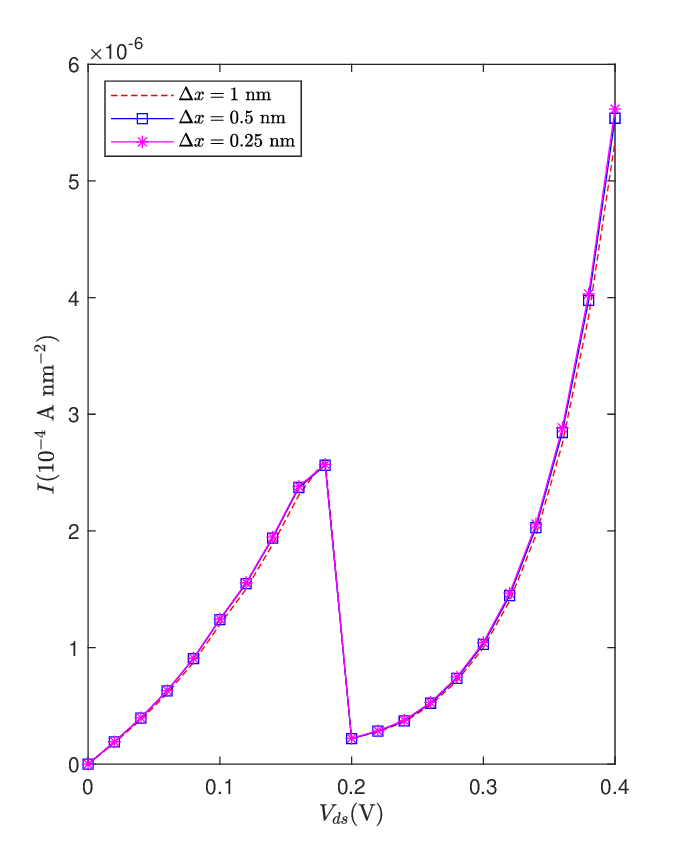}}
		\caption{I-V characteristic curves of RTD A
			with respect to
			DS1(a) and DS2(b), respectively,
			where gradually refined mesh grids are used}
		\label{fig:linearIV}
	\end{figure}

	\subsubsection{RTD B}
	Unlike RTD A studied in
	 Sect.~\ref{sec:subsec:RTDA},
	$V_s(x)$ exerted to RTD B is updated simultaneously
	with $\psi(x;k)$, via solving
	\eqref{Poisson_discretization} equipped with
	\eqref{poisson_dbc} self-consistently, according to
	\textbf{Algorithm~\ref{iteration_scheme}}.
	The temperature of lattice is set to $300\ \rm (K)$,
	$\Delta V_{ds} = 0.02\ \rm (V)$ and $\Delta x = 0.5\ \rm (nm)$.
	I-V characteristic curves corresponding to the
	DSP1 and the DSP2, are shown in \figref{fig:IV}, respectively.
	Obviously, the two I-V characteristic curves
	corresponding to the DSP1 and the DSP2 match very well,
	and they both show peak values around $V_{ds} =0.26\ \rm (V)$.
	In addition, the negative resistance phenomenon is observed
	from both curves.
	\begin{figure}[htbp]
		\centering
		\includegraphics[width=0.8\textwidth]{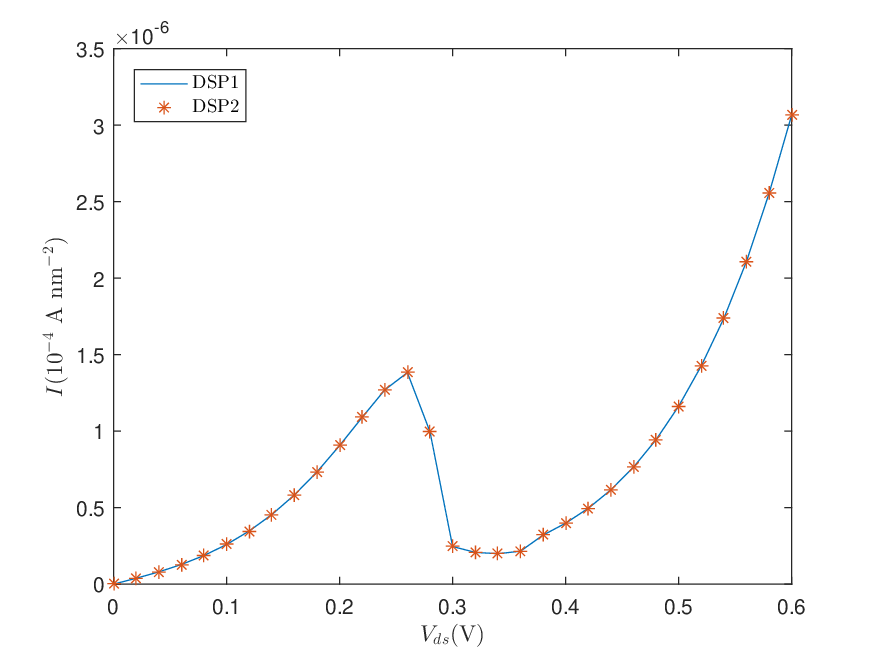}
		\caption{Simulation results about the I-V characteristic
			curve of RTD B, where the DSP1 and the DSP2 are solved
			respectively}\label{fig:IV}
	\end{figure}
	
	In \figref{fig:DenPot0.1}, \figref{fig:DenPot0.26}
	and \figref{fig:DenPot0.28},
	we show stable electron densities and potential functions
	at three different values of $V_{ds}$, which are
	$0.1 \ \rm (V)$, $0.26 \ \rm (V)$ and $0.28\ \rm (V)$, respectively.
	One observes that there are more electrons within the
	quantum well at $0.26\ \rm (V)$, than that at
	$0.1\ \rm (V)$ and $0.28\ \rm (V)$, both.
	This phenomenon is expected, since the current density
	takes its peak value at $0.26\ \rm (V)$ as is
	shown in \figref{fig:IV}.

	\begin{figure}[htbp]
		\centering  
		\subfigure[DSP1]{
			\label{Fig9.sub.a}
			\includegraphics[width=0.45\textwidth]{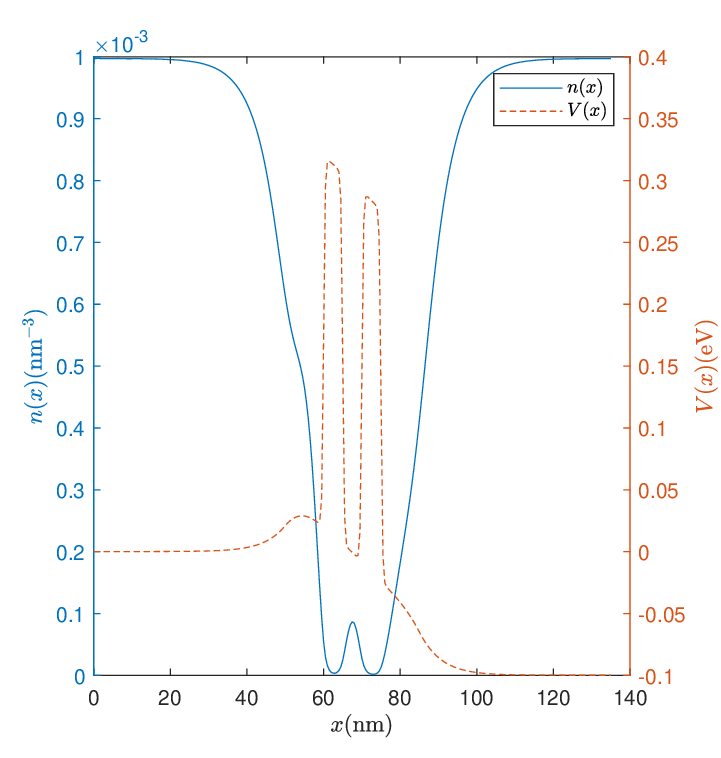}}
		\subfigure[DSP2]{
			\label{Fig9.sub.b}
			\includegraphics[width=0.45\textwidth]{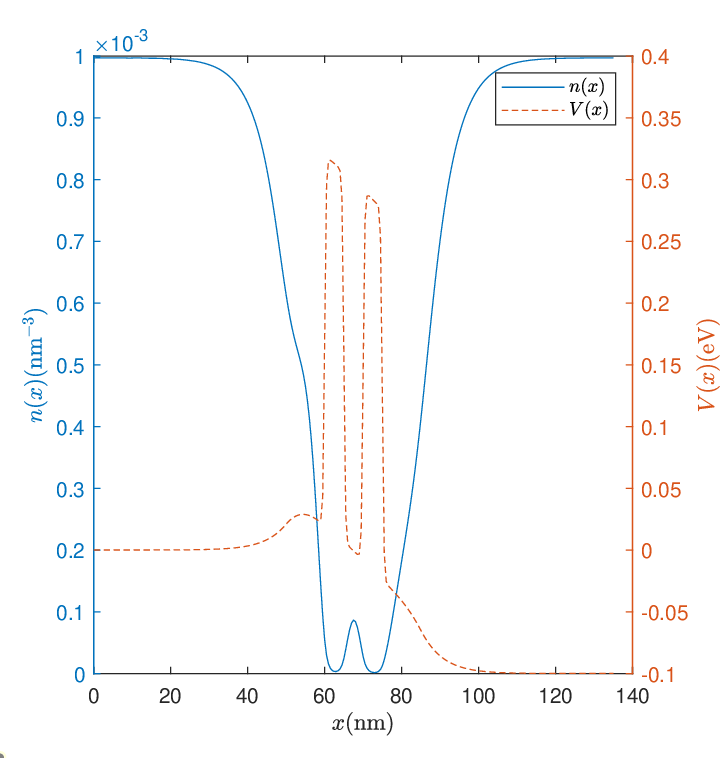}}
		\caption{Electron density $n(x)$ and total potential $V(x)$
			simulated with $V_{ds} = 0.1\ \rm (V)$}
		\label{fig:DenPot0.1}
	\end{figure}
	
	\begin{figure}[htbp]
		\centering  
		\subfigure[DSP1]{
			\label{Fig10.sub.a}
			\includegraphics[width=0.45\textwidth]{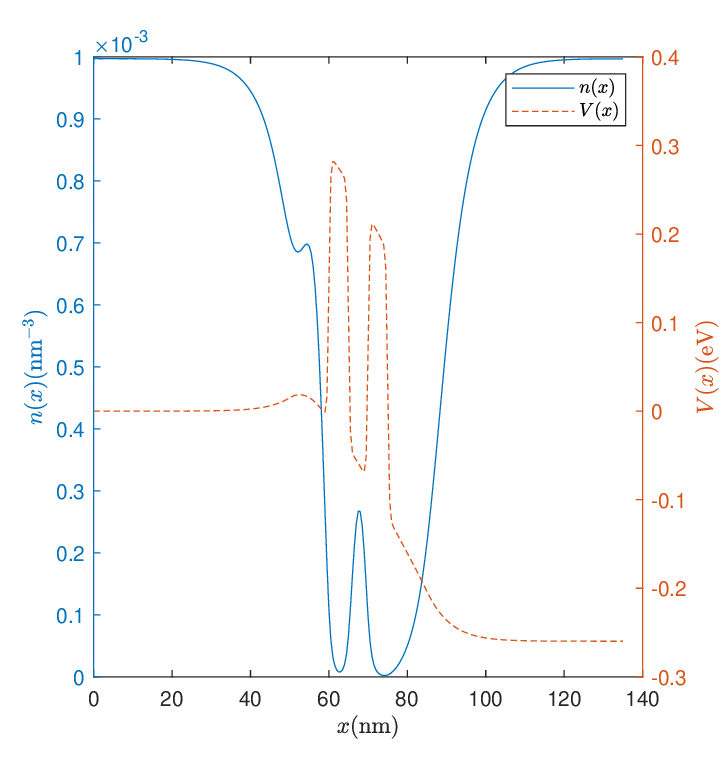}}
		\subfigure[DSP2]{
			\label{Fig10.sub.b}
			\includegraphics[width=0.45\textwidth]{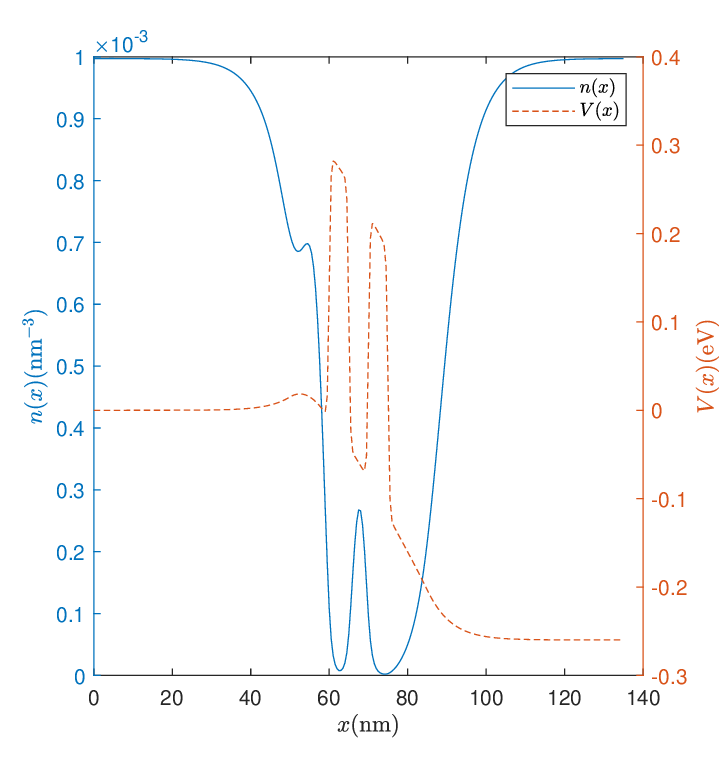}}
		\caption{Electron density $n(x)$ and total potential $V(x)$
			simulated with $V_{ds} = 0.26\ \rm (V)$}
		\label{fig:DenPot0.26}
	\end{figure}
	
	\begin{figure}[htbp]
		\centering  
		\subfigure[DSP1]{
			\label{Fig11.sub.a}
			\includegraphics[width=0.45\textwidth]{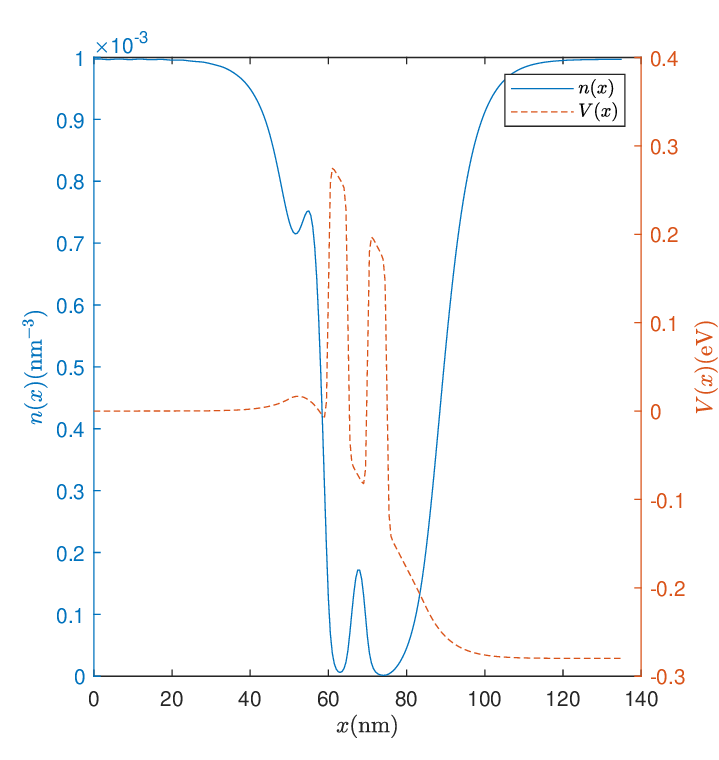}}
		\subfigure[DSP2]{
			\label{Fig11.sub.b}
			\includegraphics[width=0.45\textwidth]{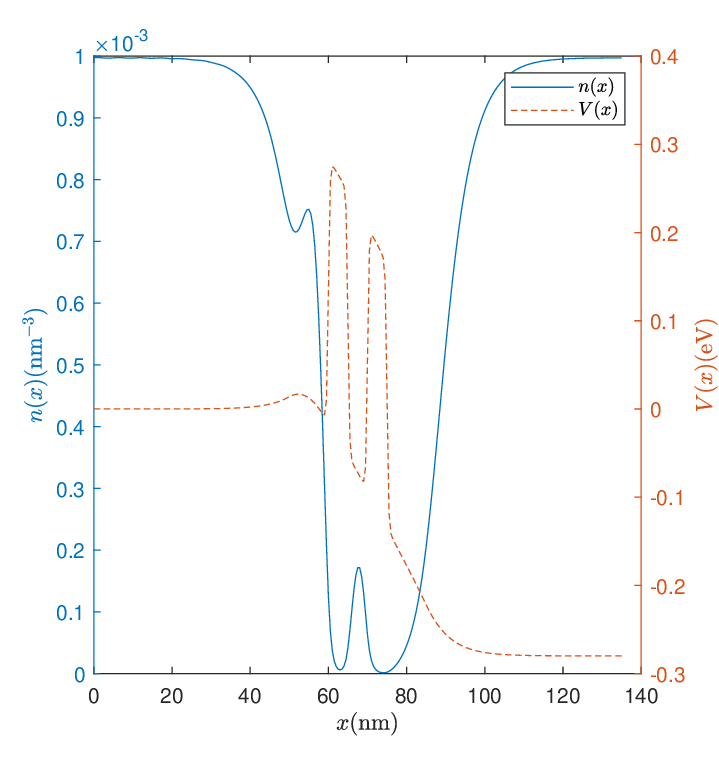}}
		\caption{Electron density $n(x)$ and total potential $V(x)$
			simulated with $V_{ds} = 0.28\ \rm (V)$}
		\label{fig:DenPot0.28}
	\end{figure}
	
	At last, we show transmission coefficient of the device,
	where simulation results obtained with the DSP1 and
	the DSP2 are shown separately in \figref{fig:TE}.
	On the one hand, curves simulated with the DSP1 and
	the DSP2 are very similar, which indicates the accuracy
	of both models.
	On the other hand, the transmission coefficient exhibits a series
	of resonant peaks which could be explained according to \cite{2020A}.
	When $E$ is close to the resonant levels of the well,
	the wave proceeds through the first barrier
	into the well, while simultaneously a wave of equal
	intensity flows out of the well on the other side
	through the other barrier.
	In addition, peaks of the transmission coefficient
	shift towards the lower energy region, and the peak value
	decreases when $V_{ds}$ increases.
	In fact, the transmission coefficient can be
	approximated by a Lorentzian-type function
	\begin{equation*}
		T(E,V_{ds}) = \dfrac{\sqrt{1-\left(
				\frac{q_e V_{ds}}{2E_n}\right)^2}}
		{1+\left(\frac{2(E-E_n)}{\Delta E}\right)^2},
	\end{equation*}
	where $E_n$ is the energy of the resonant level
	and $\Delta E$ denotes the total broadening
	of the energy level. Since resonance occurs when
	$E = E_n - \frac{q_e V_{ds}}{2}$, the resonant peaks
	shift when $V_{ds}$ changes.
	
	\begin{figure}[htbp]
		\centering  
		\subfigure[DSP1]{
			\label{Fig12.sub.a}
			\includegraphics[width=0.45\textwidth]{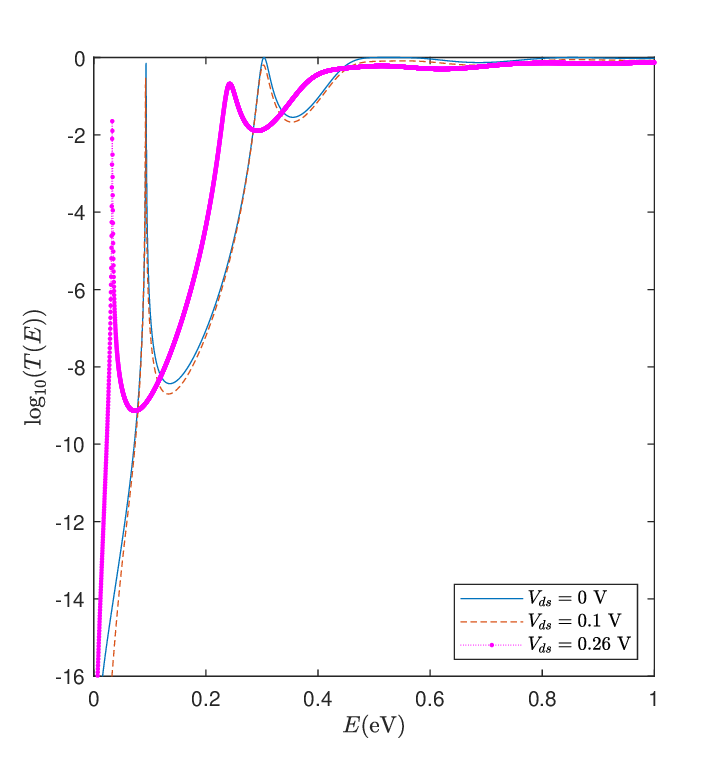}}
		\subfigure[DSP2]{
			\label{Fig12.sub.b}
			\includegraphics[width=0.45\textwidth]{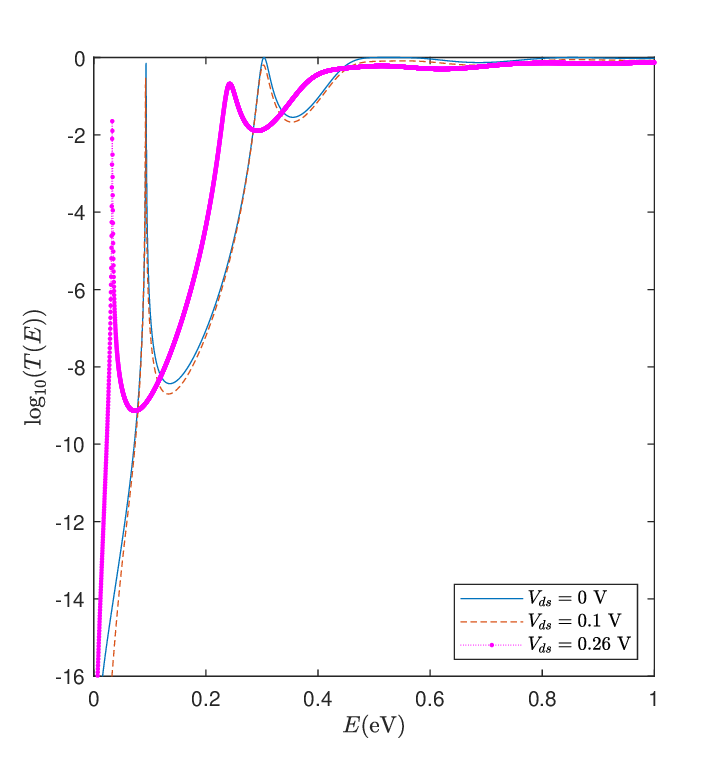}}
		\caption{Transmission coefficient in logarithmic scale for three different biases}
		\label{fig:TE}
	\end{figure}
	
	\section{Conclusion}
	In this paper, we introduce two optimal discretization schemes for
	the TBCs of the 1D Schr{\"o}dinger equation, i.e.,
	the D4TBCs and the aDTBCs.
	The D4TBCs could essentially avoid spurious oscillation
	in numerical solutions when the potential vanishes, and
	possesses the same accuracy order with the discretization
	scheme applied to discretize the 1D Schr{\"o}dinger equation.
	However, the D4TBCs can not be generalized to arbitrarily high
	order since an algebraic equation should be solved explicitly,
	which is impossible because the order of the algebraic equation
	increases as the accuracy order increases.
	To overcome that, we further propose a framework of aDTBCs, which
	is accurate in discretizing the TBCs, and could easily
	be generalized to arbitrarily high order.
	Involving the D4TBCs and the aDTBCs, two globally
	fourth order compact finite difference schemes
	are introduced for discretizing the 1D Schr{\"o}dinger-Poisson
	problem, and an algorithm is proposed to simulate semiconductor
	devices.
	Numerical simulations are carried out for a classic
	resistor and two RTDs, and simulation results
	successfully verify the accuracy orders of both schemes
	and show characteristics of these devices.

\section*{Declarations}

\begin{Funding}
	This work was supported in part by the NSFC (Grant Numbers: 12171035, 12131001) and the Natural Science Foundation of Guangdong Province of China (Grant Number: 2024A1515010356).
\end{Funding}

\begin{data}
	The datasets generated during and/or analysed during the current study are available in the figshare repository, \href{https://figshare.com/articles/dataset/1DSchrodinger\_TBC/25765377}{https://figshare.com/articles/dataset/1DSchrodinger\_TBC/25765377}.
\end{data}

\begin{Conflict}
	The authors declare that they have no known competing financial 
	interests or personal relationships that could have appeared 
	to influence the work reported in this paper.
\end{Conflict}


\bibliographystyle{spbasic}
\bibliography{ref.bib}   

\begin{thebibliography}{10}

\bibitem{Anton2001}
Anton Arnold.
\newblock Mathematical concepts of open quantum boundary conditions.
\newblock {\em Transp. Theory Stat. Phys.}, 30:561--584, 2001.
\newblock \href{https://doi.org/10.1081/TT-100105939}
  {https://doi.org/10.1081/TT-100105939}.

\bibitem{ben1997one}
Naoufel Ben~Abdallah, Pierre Degond, and Peter~A Markowich.
\newblock On a one-dimensional {Schr{\"o}dinger-Poisson} scattering model.
\newblock {\em Z. angew. Math. Phys.}, 48:135--155, 1997.
\newblock \href{https://doi.org/10.1007/pl00001463}
  {https://doi.org/10.1007/pl00001463}.

\bibitem{1974Resonant}
L.~L. Chang, L.~Esaki, and R.~Tsu.
\newblock Resonant tunneling in semiconductor double barriers.
\newblock {\em Appl. Phys. Lett.}, 24:593--595, 1974.
\newblock \href{https://doi.org/10.1063/1.1655067}
  {https://doi.org/10.1063/1.1655067}.

\bibitem{DATTA1998771}
S.~Datta, F.~Assad, and M.S. Lundstrom.
\newblock The silicon {MOSFET} from a transmission viewpoint.
\newblock {\em Superlattices Microstruct.}, 23:771--780, 1998.
\newblock \href{https://doi.org/10.1006/spmi.1997.0563}
  {https://doi.org/10.1006/spmi.1997.0563}.

\bibitem{2000Nanoscale}
Supriyo Datta.
\newblock Nanoscale device modeling: the {Green’s} function method.
\newblock {\em Superlattices Microstruct.}, 28:253--278, 2000.
\newblock \href{https://doi.org/10.1006/spmi.2000.0920}
  {https://doi.org/10.1006/spmi.2000.0920}.

\bibitem{datta1990quantum}
Supriyo Datta and Michael~J McLennan.
\newblock Quantum transport in ultrasmall electronic devices.
\newblock {\em Rep. Prog. Phys.}, 53:1003, 1990.
\newblock \href{https://doi.org/10.1088/0034-4885/53/8/001}
  {https://doi.org/10.1088/0034-4885/53/8/001}.

\bibitem{frensley1987}
William~R. Frensley.
\newblock Wigner-function model of a resonant-tunneling semiconductor device.
\newblock {\em Phys. Rev. B}, 36:1570--1580, 1987.
\newblock \href{https://doi.org/10.1103/PhysRevB.36.1570}
  {https://doi.org/10.1103/PhysRevB.36.1570}.

\bibitem{1991Numerical}
K.~L. Jensen and F.~A. Buot.
\newblock Numerical simulation of intrinsic bistability and high-frequency
  current oscillations in resonant tunneling structures.
\newblock {\em Phys. Rev. Lett.}, 66:1078--1081, 1991.
\newblock \href{https://doi.org/10.1103/PhysRevLett.66.1078}
  {https://doi.org/10.1103/PhysRevLett.66.1078}.

\bibitem{2008Boundary}
Haiyan Jiang, Sihong Shao, Wei Cai, and Pingwen Zhang.
\newblock Boundary treatments in non-equilibrium {Green’s function (NEGF)}
  methods for quantum transport in {nano-MOSFETs}.
\newblock {\em J. Comput. Phys.}, 227:6553--6573, 2008.
\newblock \href{https://doi.org/10.1016/j.jcp.2008.03.018}
  {https://doi.org/10.1016/j.jcp.2008.03.018}.

\bibitem{1997Single}
Roger Lake, Gerhard Klimeck, R.~Chris Bowen, and Dejan Jovanovic.
\newblock Single and multiband modeling of quantum electron transport through
  layered semiconductor devices.
\newblock {\em J. Appl. Phys.}, 81:7845--7869, 1997.
\newblock \href{https://doi.org/10.1063/1.365394}
  {https://doi.org/10.1063/1.365394}.

\bibitem{CompactScheme}
Sanjiva~K. Lele.
\newblock Compact finite difference schemes with spectral-like resolution.
\newblock {\em J. Comput. Phys.}, 103:16--42, 1992.
\newblock \href{https://doi.org/10.1016/0021-9991(92)90324-R}
  {https://doi.org/10.1016/0021-9991(92)90324-R}.

\bibitem{CompactSchemeShu1}
Xuliang Liu, Shuhai Zhang, Hanxin Zhang, and Chi-Wang Shu.
\newblock A new class of central compact schemes with spectral-like resolution
  {I}: {Linear} schemes.
\newblock {\em J. Comput. Phys.}, 248:235--256, 2013.
\newblock \href{https://doi.org/10.1016/j.jcp.2013.04.014}
  {https://doi.org/10.1016/j.jcp.2013.04.014}.

\bibitem{1969Notes}
J.~N. Lyness.
\newblock Notes on the adaptive {Simpson} quadrature routine.
\newblock {\em J. Assoc. Comput. Mach.}, 16:483–495, 1969.
\newblock \href{https://doi.org/10.1145/321526.321537}
  {https://doi.org/10.1145/321526.321537}.

\bibitem{2013Transient}
Jan-Frederik Mennemann, Ansgar Jüngel, and Hans Kosina.
\newblock Transient {Schr{\"o}dinger–Poisson} simulations of a high-frequency
  resonant tunneling diode oscillator.
\newblock {\em J. Comput. Phys.}, 239:187--205, 2013.
\newblock \href{https://doi.org/10.1016/j.jcp.2012.12.009}
  {https://doi.org/10.1016/j.jcp.2012.12.009}.

\bibitem{pinaud2002transient}
Olivier Pinaud.
\newblock Transient simulations of a resonant tunneling diode.
\newblock {\em J. Appl. Phys.}, 92:1987--1994, 2002.
\newblock \href{https://doi.org/10.1063/1.1494127}
  {https://doi.org/10.1063/1.1494127}.

\bibitem{Rana1996Self}
Farhan Rana, Sandip Tiwari, and D.~A. Buchanan.
\newblock Self‐consistent modeling of accumulation layers and tunneling
  currents through very thin oxides.
\newblock {\em Appl. Phys. Lett.}, 69:1104--1106, 1996.
\newblock \href{https://doi.org/10.1063/1.117072}
  {https://doi.org/10.1063/1.117072}.

\bibitem{2003nanoMOS}
Zhibin Ren, R.~Venugopal, S.~Goasguen, S.~Datta, and M.S. Lundstrom.
\newblock {nanoMOS} 2.5: A two-dimensional simulator for quantum transport in
  double-gate {MOSFETs}.
\newblock {\em IEEE Trans. Electron Devices}, 50:1914--1925, 2003.
\newblock \href{https://doi.org/10.1109/TED.2003.816524}
  {https://doi.org/10.1109/TED.2003.816524}.

\bibitem{tsu1973tunneling}
R.~Tsu and L.~Esaki.
\newblock Tunneling in a finite superlattice.
\newblock {\em Appl. Phys. Lett.}, 22:562--564, 1973.
\newblock \href{https://doi.org/10.1063/1.1654509}
  {https://doi.org/10.1063/1.1654509}.

\bibitem{1985Room}
Masahiro Tsuchiya, Hiroyuki Sakaki, and Junji Yoshino.
\newblock Room temperature observation of differential negative resistance in
  an {AlAs/GaAs/AlAs} resonant tunneling diode.
\newblock {\em Jpn. J. Appl. Phys.}, 24:L466, 1985.
\newblock \href{https://doi.org/10.1143/JJAP.24.L466}
  {https://doi.org/10.1143/JJAP.24.L466}.

\bibitem{2020A}
Sneh~Lata Yadav and Hakim Najeeb-Ud-Din.
\newblock A simple analytical model for the resonant tunneling diode based on
  the transmission peak and scattering effect.
\newblock {\em J. Comput. Electron.}, 19:1061--1067, 2020.
\newblock \href{https://doi.org/10.1007/s10825-020-01531-4}
  {https://doi.org/10.1007/s10825-020-01531-4}.

\end{thebibliography}

\end{document}